\documentclass[12pt, reqno]{amsart}

\usepackage{latexsym,amsfonts,amsmath,amssymb,amsthm,url,graphics,psfrag,amscd,stmaryrd, mathrsfs}
\usepackage[english]{babel}
\usepackage[T1]{fontenc}
\usepackage{mathrsfs}
\usepackage{graphics}
\usepackage{graphicx}
\usepackage{color}

\numberwithin{equation}{section}

\paperheight=29.7cm
 \paperwidth=21cm
 \setlength\textwidth{16cm}
\hoffset=-1in
  \setlength\marginparsep{0.5cm}
 \setlength\marginparwidth{0.5cm}
  \setlength\marginparpush{0.5cm}
  \setlength\evensidemargin{2.5cm}
 \setlength\oddsidemargin{2.5cm}
  \setlength\topmargin{2.4cm}
  \setlength\headheight{0.5cm}
  \setlength\headsep{0.5cm}
  \voffset=-1in
\setlength\textheight{23.5cm}

\newcommand{\R}{\mathbb{R}}

\newcommand{\E}{\mathbb{E}}

\newcommand{\kD}{\mathcal{D}}

\newcommand{\kO}{\mathcal{O}}

\newcommand{\kQ}{\mathcal{Q}}

\newcommand{\kM}{\mathcal{M}}

\newcommand{\kH}{\mathcal{H}}

\newcommand{\kN}{\mathcal{N}}
\newcommand{\kX}{\mathcal{X}}
\newcommand{\kL}{\mathcal{L}}
\newcommand{\kU}{\mathcal{U}}

\newcommand{\en}{\mathbb{E} }

\newcommand{\atanh}{\textrm{atanh}}

\newcommand{\lin}{\left[\kern-0.15em\left[}
\newcommand{\rin} {\right]\kern-0.15em\right]}
\newcommand{\linf}{[\kern-0.15em [}
\newcommand{\rinf} {]\kern-0.15em ]}
\newcommand{\ilin}{\left]\kern-0.15em\left]}
\newcommand{\irin} {\right[\kern-0.15em\right[}

\newtheorem{lem}{Lemma}[section]

\newtheorem{prop}[lem]{Proposition}
\newtheorem{theo}[lem]{Theorem}

\newtheorem {rem}[lem] {Remark}

\newtheorem*{ack}{Acknowledgments}

\title[ Ising model on random regular graphs]
       {\bf Annealed  limit theorems for the Ising model on random regular graphs}

\author{Van Hao Can}
\address{Aix Marseille Universit\'e, CNRS, Centrale Marseille, I2M, UMR 7373, 13453 Marseille, France}
\address{Institute of Mathematics, Vietnam Academy of Science and Technology, 18 Hoang Quoc Viet, 10307 Ha Noi, Viet Nam}
\email{cvhao89@gmail.com}

 \keywords{Ising model; Random graphs; Central limit theorem; Annealed measure.
} 
\subjclass[2010]{05C80; 60F5; 82B20}

\begin{document}
\maketitle
\begin{abstract}
In a recent paper \cite{GGHPb},  Giardin\`a, Giberti, Hofstad, Prioriello have proved a law of large number and a central limit theorem with respect to the annealed measure for the magnetization of the Ising model on some random graphs including the random 2-regular graph. We  present a new proof of their  results, which applies to all random regular graphs. In addition,  we prove the existence of  annealed pressure in the case of configuration model random graphs.    
\end{abstract}
\section{Introduction}
The ferromagnetic Ising model is one of the most well-known models in statistical physics describing  cooperative behaviors. In this model,  each vertex in a graph is assigned by one spin that can be one of two states +1, or -1, while the configuration probability is given by the Boltzmann-Gibbs measure. These spins cooperatively interact with each other toward alignment: spins of vertices connected by edges tend to be at the same state. 

\vspace{0.2cm}
The Ising model on   regular lattices has been studied carefully by many authors, resulting in numerous beautiful results, see e.g. \cite{G,L}.  Recently, a lot of attention has been draw into investigating this model on class of random graphs \cite{AB, BD, C,  DM,DMSS, D,  DGH, DGGHP, DHJN, DGM, GGHPa, GGHPb, MMS, MS}. In the new framework, the source of randomness is the combination of the  law of spin configurations and the law of  random graphs. Beside of generalizing class of graphs, some authors try to consider different types of   configuration probability.  Most of previous studies focused on the {\it quenched} setting, in which  a graph sample is fixed then the configuration probability is defined according to this realization of the graph. In a recent paper \cite{GGHPb},   Giardin\`a et al.  consider an {\it annealed} setting, where  the configuration probability is defined by taking into account the information of all  graph samples.  More precisely, they define the annealed Ising model  as follows.

\vspace{0.2 cm}
 Let $G_n$ be a random multi-graph, that is a random graph possibly having self-loops and multiple edges between two vertices,  with  $n$ vertices  $v_1, \ldots,v_n$.  Let $\Omega_n=\{-1,1\}^n$ be the space of spin configurations. For any  $\sigma=(\sigma_1, \ldots, \sigma_n) \in \Omega_n$, its energy  is given by the Hamiltonian function: 
\[H(\sigma)= -\beta  \sum_{ i \leq j} k_{i,j} \sigma_i \sigma_j - B \sum_{i=1}^n \sigma_i,\]
where $k_{i,j}$ is the number of edges between $v_i$ and $v_j$, where $\beta \geq 0$ is the inverse temperature and  $B \in \R$ is the uniform external magnetic field.  

Then the configuration probability is given by what they call the {\it annealed measure}:
\[\mu_n(\sigma)= \frac{\en(\exp (-H(\sigma)))}{\E(Z_n(\beta,B))},\] 
 where $\E$ denotes the expectation with respect to the random graph, and $Z_n(\beta,B)$ is the partition function:
\[Z_n(\beta,B)= \sum\limits_{\sigma \in \Omega_n} \exp( -H(\sigma)).\]


 In \cite{GGHPb}, the authors study this  Ising model  on the rank-one inhomogeneous random graph, the random 2-regular graph and the configuration model with degrees $1$ and $2$.  After determining limits of thermodynamic quantities and the critical inverse temperature, they prove  laws of large numbers (LLN) and central limit theorems (CLT) with respect to the  annealed measure for the total spin $S_n= \sigma_1 + \ldots + \sigma_n$.  Our main contribution in this paper is to   generalize their results  to the class of all random regular graphs and prove the existence of annealed pressure in the case of the  configuration model -  a generalization of  the random regular graph, see Section 2.1 for a definition.
\subsection{Main results}First, we  give some  definitions  following \cite{GGHPb} of   the thermodynamic quantities in finite volume.
\begin{itemize}
\item[(i)] The annealed  pressure is given by
\[\psi_n(\beta,B)=\frac{1}{n} \log \E(Z_n(\beta,B)).\]
\item[(ii)] The annealed magnetization is given by
\[M_n(\beta,B)=\E_{\mu_n} \left( \frac{S_n}{n}\right),\]
where $S_n = \sigma_1 + \ldots+\sigma_n$. After a simple computation, we get 
\[M_n(\beta,B)= \frac{\partial}{\partial B} \psi_n(\beta,B).\]
\item[(iii)] The annealed susceptibility is given by
\[\chi_n(\beta,B)=\textrm{Var}_{\mu_n} \left( \frac{S_n}{\sqrt{n}}\right).\]
We also can  prove that
\[\chi_n(\beta,B)= \frac{\partial}{\partial B} M_n(\beta,B) =\frac{\partial^2}{\partial B^2} \psi_n(\beta,B).\]
\end{itemize} 
When the sequence $(M_n(\beta,B))_n$ converges to a limit, say $\kM(\beta,B)$, we define the spontaneous magnetization as $\kM(\beta,0^+)= \lim \limits_{B \searrow 0} \kM(\beta,B)$. Then the critical inverse temperature is defined as 
\[\beta_c = \inf \{\beta>0: \kM(\beta,0^+)>0 \}.\] 
 Finally,  the region of the existence of the limit magnetization is defined as
\[\kU= \{(\beta,B): \beta \geq 0, B \neq 0 \textrm{ or } 0<\beta < \beta_c, B=0\}.\]
 Now, we may  introduce our results in the case of random regular graphs. First, we show  the  limits of  thermodynamic quantities when the number of vertices tends to infinity.
\begin{theo} \label{ttd}(The thermodynamic limits). Let us consider the Ising model on the random $d$-regular graph with $d\geq 2$. Then the following assertions hold.
\begin{itemize}
\item[(i)] For all $\beta \geq 0$ and $B \in \R$, the annealed pressure converges 
\begin{eqnarray*}
 \lim_{n\rightarrow \infty} \psi_n(\beta,B) &=&\psi(\beta,B) \\
  &=& \frac{\beta d}{2}  -B+ \max_{0 \leq t \leq 1} \left[ (t-1) \log(1-t)- t \log t + 2Bt + d F(t) \right], 
\end{eqnarray*}
where 
\begin{equation*} \label{fut}
F(t)= \int_0^{u(t)} \log f(s)ds,
\end{equation*}
with $u(t)= \min \{t,  1-t\}$  and
\begin{eqnarray*}
f( s) = \frac{e^{-2\beta}(1-2s)+ \sqrt{1+(e^{-4\beta}-1)(1-2s)^2}}{2(1-s)}.
\end{eqnarray*}
\item[(ii)] For all $(\beta, B) \in \kU$, the magnetization  converges 
\begin{equation*}
\lim_{n\rightarrow \infty}M_n(\beta,B) =\kM(\beta,B)= \frac{\partial}{\partial B} \psi(\beta,B). 
\end{equation*} 
Moreover, the critical inverse temperature is 
\begin{displaymath}
\beta_c= \emph{\atanh}(1/(d-1))=\left \{ \begin{array}{ll}
\frac{1}{2} \log \left( \frac{d}{d-2} \right) & \textrm{ if } \quad  d \geq 3  \\
\infty  & \textrm{ if } \quad  d= 2.  
\end{array} \right.
\end{displaymath}
\item[(iii)] For all $(\beta,B) \in \kU$,  the annealed susceptibility converges 
\[ \lim_{n\rightarrow \infty}\chi_n(\beta,B) = \chi(\beta,B)= \frac{\partial^2}{\partial B^2} \psi(\beta,B) .\]
\end{itemize} 
\end{theo}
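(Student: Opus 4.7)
The plan is to realize the random $d$-regular graph via the configuration model, compute $\E[Z_n(\beta,B)]$ explicitly, and extract its exponential rate by Laplace's method. Parts (ii) and (iii) will then follow from part (i) by convex analysis.

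For part (i), I would fix a configuration $\sigma \in \Omega_n$ with $n_+ = tn$ plus spins and rewrite
\[
-H(\sigma) = \tfrac{\beta d n}{2} - 2\beta E_d(\sigma) + B(2n_+ - n),
\]
where $E_d(\sigma)$ is the number of edges joining opposite spins. In the configuration model, counting the pairings of $dn$ half-edges with exactly $k$ cross-pairings yields
\[
\E\bigl[e^{-2\beta E_d(\sigma)}\bigr] = \sum_k p_n(k;t)\, e^{-2\beta k},
\]
with $p_n(k;t)$ a ratio of double factorials. Stirling's formula turns each term into $\exp(n\, I(t,s) + o(n))$ with $s = k/(dn)$, and Laplace's method produces a variational expression $\max_{t,s} \Phi(t,s)$ for $\psi(\beta,B)$. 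The inner maximization in $s$ at fixed $t$ reduces, after differentiating $\Phi$ in $s$, to a quadratic whose admissible root is exactly $f(s)$; re-expressing the optimum of the entropic contribution as the integral $dF(t)= d\int_0^{u(t)} \log f(s)\,ds$ (with $u(t) = \min\{t,1-t\}$ arising from the constraint $k \leq d\min(tn,(1-t)n)$) produces the stated closed form, while the prefactor $\frac{\beta d}{2} - B + \max_t[(t-1)\log(1-t) - t \log t + 2Bt + \cdots]$ collects the configurational entropy $\binom{n}{n_+}$, the field term $B(2n_+ - n)$ and the constant $\beta dn/2$.

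For part (ii), each $\psi_n(\beta, \cdot)$ is convex and real analytic since $\partial_B^2 \psi_n = \chi_n \geq 0$, so the pointwise limit $\psi(\beta,\cdot)$ is also convex and at every $B$ where it is differentiable one has $M_n \to \partial_B \psi$. Non-differentiability of $\psi$ is confined to the segment $\{B=0,\ \beta \geq \beta_c\}$, and convergence therefore holds throughout $\kU$. To identify $\beta_c$ I would analyze the stationarity condition of the outer maximum at $B=0$: by the symmetry $\sigma \mapsto -\sigma$ the point $t=1/2$ is always critical, and $\beta_c$ is the value at which it ceases to be the unique maximizer. Linearizing the Euler--Lagrange equation around $t=1/2$ using the explicit form of $f$ near $s=1/2$ produces the bifurcation condition $(d-1)\tanh\beta = 1$, hence $\beta_c = \atanh(1/(d-1))$, with value $\infty$ for $d=2$.

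For part (iii), the sequence $M_n(\beta,\cdot)$ is non-decreasing in $B$ and converges on $\kU$ to $\kM(\beta,\cdot)=\partial_B \psi$; smoothness of $\kM$ on $\kU$ follows from the implicit function theorem applied at the non-degenerate maximizer produced in part (i). Standard results on convergence of convex functions then upgrade this to convergence of the derivatives $\chi_n = \partial_B M_n$ to $\partial_B \kM$ on $\kU$. The main obstacle is the Laplace analysis in part (i): making rigorous the passage from the discrete sum over $k$ to the continuous variational integral uniformly in $(\beta,B)$, explicitly carrying out the inner maximization to arrive at the closed form $f(s)$, and verifying that the maximizer lies in the interior of $[0,u(t)]$ off the critical locus, so that the saddle-point asymptotics are non-degenerate.
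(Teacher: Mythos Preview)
Your approach to part (i) differs from the paper's: you propose to compute the distribution of cross-edges directly via double factorials and run a two-variable Laplace method in $(t,s)$, whereas the paper derives a one-step recursion for $g(\beta,k,m)=\E[e^{-2\beta X(k,m)}]$ in the random $1$-regular graph, shows the ratio $h(k,m)=g(\beta,k,m)/g(\beta,k-1,m)$ is approximated by $f((k-1)/m)$, and sums $\log h$ to obtain $\log g \approx m\int_0^{k/m}\log f$. The recursive route is what makes $F(t)$ appear naturally as an \emph{integral} of $\log f$; in your scheme the inner maximization over $s$ would produce $F(t)$ as the \emph{value} of a one-variable optimization, and your assertion that ``the admissible root is exactly $f(s)$'' and that this rewrites as the stated integral is left unjustified. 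Both routes can be made to work, but yours needs an extra identification step you have not supplied.

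The genuine gap is in part (iii). Pointwise convergence of convex $\psi_n\to\psi$ yields $\psi_n'\to\psi'$ wherever $\psi$ is differentiable (this is what drives (ii)), but there is no ``standard result on convergence of convex functions'' that upgrades this to $\psi_n''\to\psi''$, and monotonicity of $M_n=\psi_n'$ does not help either: one can have smooth nondecreasing $M_n\to M$ with $M$ smooth and still $M_n'(B_0)\not\to M'(B_0)$ (e.g.\ $M_n(x)=x+\tfrac{1}{n}\phi(nx)$ for a bump $\phi$ with $|\phi'|\le 1$). The paper therefore proves $\chi_n\to\chi$ by hand: Lemma~\ref{lcgf} shows $c_n''(t_n)\to -4/H''(t_*)$ via a second-order Laplace analysis of $\sum_j\binom{n}{j}e^{2(B+s)j}g(\beta,dj,dn)$, localizing to a window $|j-j_*|<n^{5/6}$ and reducing the variance to Gaussian integrals. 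This computation is the technical core of (iii) and cannot be replaced by soft convexity. Relatedly, in (ii) you skip the key step (the paper's Claim~$1^*$): for $B\neq 0$ one must show $L'(t)=0$ has a \emph{unique} root $t_*\in(1/2,1)$ with $L''(t_*)\neq 0$, which the paper obtains through a nontrivial algebraic reduction of $L''(t)=0$ to a quadratic in $t$; without this uniqueness and nondegeneracy, neither the implicit-function argument nor the differentiability of $\psi(\beta,\cdot)$ at $B\neq 0$ goes through.
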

Based on the thermodynamic limits theorem, we obtain  a  law of large number and a central limit theorem for the total spin. 
\begin{theo} \label{tll}(Annealed  LLN). 
Suppose that $(\beta,B) \in \kU$. Then for any $\varepsilon >0$, there exists a positive constant $L=L(\varepsilon)$, such that for all sufficiently large $n$
\[\mu_n \left( \, \vline \frac{S_n}{n} - \kM(\beta,B) \vline \, > \varepsilon \right) \leq \exp(-nL),\]
where $\kM(\beta,B)$ is defined in Theorem \ref{ttd} (ii). 
\end{theo}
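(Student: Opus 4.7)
\emph{Strategy.} My plan is to apply the classical exponential Chebyshev (Cram\'er--Chernoff) method, using the annealed pressure convergence from Theorem~\ref{ttd} as the input. The decisive observation is that the external field contributes only the linear term $B S_n$ to $-H(\sigma)$, so tilting by $t$ produces the exact identity
\begin{equation*}
\E_{\mu_n}\!\bigl[e^{tS_n}\bigr] \;=\; \frac{\E[Z_n(\beta, B+t)]}{\E[Z_n(\beta, B)]} \;=\; \exp\!\Bigl(n\bigl[\psi_n(\beta, B+t) - \psi_n(\beta, B)\bigr]\Bigr).
\end{equation*}
Hence Theorem~\ref{ttd}(i) gives, for every $t \in \R$,
\begin{equation*}
\lim_{n\to\infty}\tfrac{1}{n}\log \E_{\mu_n}\bigl[e^{tS_n}\bigr] \;=\; \varphi_B(t) := \psi(\beta, B+t) - \psi(\beta, B).
\end{equation*}

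\emph{Main steps.} For $\varepsilon > 0$ and $t > 0$, Markov's inequality produces
\begin{equation*}
\mu_n\!\Bigl(\tfrac{S_n}{n} - \kM(\beta,B) > \varepsilon\Bigr) \;\le\; \exp\!\Bigl(-nt[\kM(\beta,B)+\varepsilon] + n[\psi_n(\beta,B+t)-\psi_n(\beta,B)]\Bigr).
\end{equation*}
Since Theorem~\ref{ttd}(ii) identifies $\partial_B \psi(\beta, B) = \kM(\beta, B)$, a Taylor expansion gives $\varphi_B(t) - t\kM(\beta,B) = o(t)$ as $t \to 0^+$. Choosing $t = t(\varepsilon) > 0$ small enough that the residual is at most $t\varepsilon/2$ makes the limit exponent bounded above by $-t\varepsilon/2$, and for all $n$ sufficiently large the probability is at most $\exp(-nt\varepsilon/3)$. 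The lower tail is handled symmetrically with $t < 0$, and taking the minimum of the two rates yields the constant $L = L(\varepsilon) > 0$ in the statement.

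\emph{Main obstacle.} The delicate point is the differentiability of $\psi(\beta, \cdot)$ at the given $B$. For $B \neq 0$ this is contained directly in Theorem~\ref{ttd}(ii). For $B = 0$ with $0 < \beta < \beta_c$, the subcritical condition $\kM(\beta, 0^+) = 0$ combined with the symmetry $\psi(\beta, B) = \psi(\beta, -B)$ forces $\partial_B \psi(\beta, 0) = 0$, so $\varphi_0(t) = o(t)$ as $t \to 0$ by integrating the continuous magnetization profile from the right and the left. Finally, one needs pointwise convergence of $\psi_n(\beta, \cdot)$ to yield a rate estimate uniform in $n$ at the two tilted values $B \pm t(\varepsilon)$; this is automatic because each $\psi_n(\beta, \cdot)$ is convex in $B$ (the partition function is a positive linear combination of exponentials $e^{B S_n(\sigma)}$ in $B$, a property preserved after taking $\E$ over the random graph), and pointwise convergence of convex functions to a convex limit upgrades to uniform convergence on every compact interval.
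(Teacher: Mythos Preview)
Your proof is correct and follows essentially the same approach as the paper: both establish the cumulant generating function identity $\tfrac{1}{n}\log\E_{\mu_n}[e^{tS_n}]=\psi_n(\beta,B+t)-\psi_n(\beta,B)$, use Theorem~\ref{ttd}(i) for its pointwise limit and Theorem~\ref{ttd}(ii) for differentiability at $B$, and then apply the standard Cram\'er--Chernoff bound. The paper simply cites \cite{GGHPa,GGHPb} for this last step rather than writing it out, and your separate treatment of the $B=0$ subcritical case is unnecessary since Theorem~\ref{ttd}(ii) already asserts $\partial_B\psi(\beta,B)=\kM(\beta,B)$ for all $(\beta,B)\in\kU$.
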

\begin{theo}  \label{tclt} (Annealed CLT). For all $(\beta,B) \in \kU$, the total spin under the annealed measure satisfies a central limit theorem:
\[\frac{S_n - \E_{\mu_n}(S_n)}{\sqrt{n}} \quad \mathop{\longrightarrow}^{(\kD)} \quad \kN(0, \chi(\beta,B)) \qquad \textrm{ w.r.t. } \mu_n,\]
where $\chi(\beta,B)$ is defined in Theorem \ref{ttd} (iii) and  $\kN(0, \chi)$ denotes a centered Gaussian random variable with variance $\chi$.  
\end{theo}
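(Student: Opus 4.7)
The plan is to prove convergence of the Laplace transform of $Y_n:=(S_n-\E_{\mu_n}(S_n))/\sqrt{n}$ and invoke Curtiss's theorem. The starting identity is
\[
\E_{\mu_n}\!\bigl[e^{tS_n/\sqrt{n}}\bigr]=\frac{1}{\E(Z_n(\beta,B))}\sum_{\sigma\in\Omega_n}\E\!\bigl[e^{-H(\sigma)+tS_n(\sigma)/\sqrt{n}}\bigr]=\frac{\E(Z_n(\beta,B+t/\sqrt{n}))}{\E(Z_n(\beta,B))},
\]
which holds because shifting the external field by $t/\sqrt{n}$ exactly introduces the factor $e^{tS_n/\sqrt{n}}$ into the Boltzmann weight. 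Taking logarithms turns this into a difference of annealed pressures,
\[
\log\E_{\mu_n}\!\bigl[e^{tS_n/\sqrt{n}}\bigr]=n\psi_n(\beta,B+t/\sqrt{n})-n\psi_n(\beta,B),
\]
so the whole problem reduces to a second-order expansion of $\psi_n$ in $B$.

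Next I would apply Taylor's formula with integral remainder, using the identities $\partial_B\psi_n=M_n$ and $\partial_B M_n=\chi_n$ recalled in the introduction:
\[
n\psi_n(\beta,B+t/\sqrt{n})-n\psi_n(\beta,B)=t\sqrt{n}\,M_n(\beta,B)+\int_0^{t}(t-u)\,\chi_n(\beta,B+u/\sqrt{n})\,du.
\]
Since $\E_{\mu_n}(S_n)=nM_n(\beta,B)$, subtracting $t\sqrt{n}\,M_n(\beta,B)$ absorbs the centering and leaves
\[
\log\E_{\mu_n}\!\bigl[e^{tY_n}\bigr]=\int_0^{t}(t-u)\,\chi_n(\beta,B+u/\sqrt{n})\,du.
\]
To conclude I would pass the limit inside the integral. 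For $(\beta,B)\in\kU$ and $|u|\leq|t|$, the perturbed point $(\beta,B+u/\sqrt{n})$ stays in $\kU$ once $n$ is large enough: if $B\neq 0$ the perturbation is too small to flip the sign of $B$, while if $B=0$ one has $0<\beta<\beta_c$ and every value of the perturbed field is admissible. Theorem~\ref{ttd}(iii) then supplies the pointwise convergence $\chi_n(\beta,B+u/\sqrt{n})\to\chi(\beta,B)$ along this path, and dominated convergence yields
\[
\lim_{n\to\infty}\log\E_{\mu_n}\!\bigl[e^{tY_n}\bigr]=\chi(\beta,B)\int_0^{t}(t-u)\,du=\frac{t^{2}}{2}\,\chi(\beta,B),
\]
the log-MGF of $\kN(0,\chi(\beta,B))$, from which the CLT follows.

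The main obstacle will be this final passage to the limit inside the integral, since the pointwise convergence granted by Theorem~\ref{ttd}(iii) at the single point $(\beta,B)$ is not quite sufficient: one needs uniform-in-$n$ control of $\chi_n$ along the shrinking interval $[B-|t|/\sqrt{n},B+|t|/\sqrt{n}]$. The most robust way is to establish a uniform bound on the third derivative $\partial_B^{3}\psi_n=\partial_B\chi_n$ on a neighborhood of $B$; this forces $|\chi_n(\beta,B+u/\sqrt{n})-\chi_n(\beta,B)|=O(1/\sqrt{n})$, after which the exchange of limit and integral is automatic. Since $\partial_B^{3}\psi_n$ can be written as a normalised third cumulant of $S_n$ under an auxiliary annealed Ising measure at a nearby field, such a bound should follow from the same bond-counting reduction on the random $d$-regular graph used to prove Theorem~\ref{ttd}(iii), possibly combined with a GHS-type convexity inequality to handle the sign of the cumulant.
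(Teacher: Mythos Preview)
Your overall structure coincides with the paper's: the MGF identity, the second-order Taylor expansion of $\psi_n$ in $B$, and the reduction of the problem to the locally-uniform convergence $\chi_n(\beta,B+u/\sqrt n)\to\chi(\beta,B)$ for $|u|\le t$ are exactly the scheme from \cite[Section~3.2]{GGHPb} that the paper invokes. This locally-uniform statement is precisely Lemma~\ref{lcgf} (there phrased as $c_n''(t_n)\to\chi(\beta,B)$ for every sequence $t_n\in[0,t/\sqrt n]$, with $c_n''(s)=\chi_n(\beta,B+s)$), and Theorem~\ref{ttd}(iii) is itself obtained in the paper as the special case $t_n\equiv 0$; so the diagonal version is the real content and cannot be recovered from the pointwise statement you first quote.

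The difference is in how that lemma is proved. The paper does not bound a third derivative; it carries out a direct Laplace-type analysis of the explicit one-dimensional sum $\E(Z_n(\beta,B+s))=e^{(\beta d/2-B-s)n}\sum_j\binom nj e^{2(B+s)j}g(\beta,dj,dn)$, localises to a window $|j-j_*|<n^{5/6}$ around $j_*=[nt_*]$, Taylor-expands $L(j/n)$ about $t_*$, and reads off $c_n''(t_n)$ as a ratio of Gaussian-type integrals converging to $-4/H''(t_*)$. Your proposed alternative of bounding $\partial_B^3\psi_n$ uniformly would also close the gap, but executing it via the ``same bond-counting reduction'' amounts to repeating that Laplace analysis one order higher, so it is not a shortcut. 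The GHS suggestion is unlikely to help as stated: GHS is formulated for ferromagnetic Gibbs measures on a fixed graph, whereas the annealed measure $\mu_n$ is not of that form, and even where GHS applies it controls only the \emph{sign} of the third cumulant, not the uniform magnitude bound you actually need.
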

In the low temperature regime and in the absence of external field, the magnetization does not converges to a constant. However, similar to Curie-Weiss model, the law of magnetization converges to a combination of two Dirac's measures.  
\begin{prop} \label{pnl} 
Suppose that $\beta> \beta_c$ and $B=0$. Then there exists a positive constant $\nu=\nu(\beta)$, such that as $n \rightarrow \infty$,
\[\mu_n \left( \, \vline \, \frac{S_n}{n} - \nu \,  \vline \, \leq n^{-1/6}  \right)  \longrightarrow 1/2 \hspace{0.9 cm} \textrm{and } \hspace{0.9 cm} \mu_n \left( \, \vline \, \frac{S_n}{n} + \nu \,  \vline \, \leq n^{-1/6}  \right)  \longrightarrow 1/2. \]
\end{prop}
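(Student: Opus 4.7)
The plan is to reduce the problem to the asymptotics of a one-dimensional marginal and then apply Laplace's method, leveraging the $\sigma \leftrightarrow -\sigma$ symmetry at $B=0$.

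First I would use the vertex-exchangeability of the random $d$-regular graph to observe that the annealed weight $\E[\exp(\beta \sum_{i\leq j} k_{i,j}\sigma_i\sigma_j)]$ depends on $\sigma$ only through $n_+(\sigma) := \#\{i : \sigma_i = +1\}$; call this common value $A_n(n_+)$. This gives
$$\mu_n\bigl(S_n = 2n_+ - n\bigr) = \binom{n}{n_+} A_n(n_+)\,/\,\E[Z_n(\beta,0)].$$
The flip $\sigma \mapsto -\sigma$ preserves the Hamiltonian at $B=0$ and exchanges $n_+$ with $n-n_+$, so this marginal is symmetric in $S_n \leftrightarrow -S_n$; it therefore suffices to establish the convergence near $+\nu$, with $\nu := \kM(\beta, 0^+) > 0$.

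Next I would extract the rate function from Theorem \ref{ttd}(i), whose proof at $B = 0$ actually delivers the uniform expansion
$$\tfrac{1}{n}\log\!\bigl(\tbinom{n}{n_+}A_n(n_+)\bigr) = \phi(t) - \beta d/2 + o(1),$$
with $t = n_+/n$ and $\phi(t) = (t-1)\log(1-t) - t\log t + dF(t)$. The symmetry $u(t) = u(1-t)$ built into $F$ gives $\phi(t) = \phi(1-t)$. Under the assumption $\beta > \beta_c$ the spontaneous magnetization $\nu = \kM(\beta,0^+)$ is positive, which via the envelope-theorem identity $t^*(0^+) = (1+\nu)/2$ forces $\phi$ to attain its maximum at the two symmetric points $t_\pm := (1\pm\nu)/2$. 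Direct computation using the explicit formula for $f$ then confirms these are the only global maxima, that they are non-degenerate ($\phi''(t_\pm) = -c$ for some $c = c(\beta, d) > 0$), and that $\phi(t_\pm) - \phi(t) \geq c'(t - t_\pm)^2$ near $t_\pm$ with a uniform strict gap elsewhere.

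Finally, Laplace's method applies cleanly: since the window width $n^{-1/6}$ vastly exceeds the Gaussian fluctuation scale $n^{-1/2}$ around $t_+$, the local expansion yields
$$\sum_{n_+ :\, |2n_+/n - 1 - \nu| \leq n^{-1/6}} \binom{n}{n_+} A_n(n_+) = \sqrt{2\pi/(nc)}\,e^{n(\phi(t_+) - \beta d/2)}(1+o(1)),$$
and the symmetric sum around $t_-$ contributes identically, while the mass on $t$-values bounded away from $\{t_\pm\}$ is exponentially smaller. Dividing shows the ratio converges to $1/2$, which is the desired conclusion. The hardest step will be step two: obtaining the uniform asymptotic for $\binom{n}{n_+} A_n(n_+)$, which demands care with the configuration-model combinatorics underlying Theorem \ref{ttd}(i), and ruling out spurious global maxima of $\phi$, which reduces via the implicit relation defining $f$ to the bifurcation at $\beta_c$ of the magnetization fixed-point equation on the infinite $d$-regular tree — precisely where the hypothesis $\beta > \beta_c$ enters decisively.
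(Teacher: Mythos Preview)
Your plan matches the paper's proof: reduce to the symmetric one-dimensional marginal $y_j=\binom{n}{j}\,g(\beta,dj,dn)$, identify the two maxima $t_\pm=(1\pm\nu)/2$ of $H(t)=\phi(t)$ via Claim~2a of Section~4, and show the mass outside the two windows is negligible. One caveat worth noting: the bare $o(1)$ uniform expansion you invoke is too coarse for concentration at scale $n^{-1/6}$ (an $o(n)$ additive error in $\log y_j$ can swamp the $n^{2/3}$ gap at the window boundary), so you need the sharper control of Lemma~\ref{lgb} as used in the paper's estimate \eqref{xjsj}; and the full Laplace asymptotic with prefactor $\sqrt{2\pi/(nc)}$ is unnecessary, since the exact symmetry $y_j=y_{n-j}$ makes the two window sums identically equal and it suffices to show the complement is negligible.
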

Our result on the existence of annealed pressure in the case of the configuration model with general degree distributions is stated  in Section 7, due to its complexity.
\subsection{Discussion}   One  challenge   in the annealed setting is that we have to take into account  all  graph samples. There are probably some rare  samples that  give a non-trivial contribution. Studying them   often links to a very challenging topic, the  large deviation properties  of  random graphs. Let us give here some comments on the approach, consequence and extension of our results. 

\vspace{0.2 cm}
\noindent (i) On the strategy of proofs. Structure of the random $d$-regular graph  strongly depends on $d$. When $d$ increases, the graph becomes more and more complicated.  In the case  $d=2$, the annealed Ising model on the graph is well studied  in  \cite{GGHPb}. Their approach is based on the fact  that every random $2$-regular graph   consists of a collection of cycles and the partition function on a cycle can be computed explicitly. 
However, when $d \geq 3$,  this particular fact does not hold anymore. On the other hand, we  realize  that for any spin configuration, its Hamiltonian can be expressed in terms of $\beta$, $B$ and the number of disagreeing edges (the edges whose two  extremities have different spins). Moreover, by the symmetry  in term of law  of random regular graphs, for any pair of configurations  with the same number of positive spins, these numbers of disagreeing edges have the same distribution. Thus the Halmitonians of these configurations have the same law. Hence  we show that the expectation of the partition function has the form  $\sum _{i \leq n} \binom{n}{i} \theta(i, \beta, B)$. Furthermore,
\begin{equation*}
\frac{1}{n}\log \sum _{i=0}^n \binom{n}{i} \theta(i, \beta, B) =  \max_{0 \leq i \leq n} \frac{1}{n} \log \left[ \binom{n}{i} \theta(i, \beta, B)\right] + o(1).
\end{equation*}
 This explains the form of the annealed pressure $\psi(\beta,B)$  in Theorem  \ref{ttd} (i), which  somehow looks like  a large deviation result.

\vspace{0.2 cm}
 To prove the limit theorems, we use the same general strategy as in \cite{GGHPa, GGHPb}.  More precisely, we define the sequence of  cumulant generating functions as
\[c_n(t)= \frac{1}{n} \log \E_{\mu_n} (\exp(tS_n))= \psi_n(\beta,B+t) - \psi_n(\beta,B).\]
Then by Theorem \ref{ttd}, this sequence converges to 
\[ c(t)= \psi(\beta, B+t) - \psi(\beta, B).  \]
In \cite[Sections 2.1 and 2.2]{GGHPa}, the authors show  
  that if the function  $c(t)$ is differentiable at $0$ then the sequence $(S_n/n)_n$ converges in probability exponentially fast to $c'(0)$ w.r.t $\mu_n$.  That means,  for any real number $\varepsilon >0$, there exists a positive constant $L=L(\varepsilon)$, such that  for all $n$ large enough
\[\mu_n \left( \, \vline \frac{S_n}{n} - c'(0) \vline \, > \varepsilon \right) \leq \exp(-nL).\]
We will show in Section 4 that the function $\psi(\beta,B)$ is differentiable with respect to  $B$. Thus $c(t)$ is differentiable and the annealed LLN follows. 

On the other hand,  by using Theorem A.8.7 (a) in \cite{E},  the central limit theorem in Theorem \ref{tclt} follows from the convergence of generating function of the normalized sum, i.e. for any fixed number $t>0$, 
\begin{equation*} \label{dd}
\E_{\mu_n} \left( \exp \left( \frac{t(S_n - \E_{\mu_n} (S_n))}{\sqrt{n}} \right) \right)\longrightarrow \exp \left( \frac{\chi(\beta,B) t^2}{2} \right).
\end{equation*}
The authors in \cite[Section 3.2]{GGHPb} show that this convergence holds  if  the following condition is  satisfied:  For any fixed number $t>0$ and for any sequence $(t_n)$ satisfying $t_n \in [0, t/ \sqrt{n}]$, one has
\begin{equation*} 
c_n''(t_n) \rightarrow \chi(\beta, B).
\end{equation*}
 We refer to  Lemma \ref{lcgf} for the proof of this condition.

\vspace{0.2 cm}
\noindent (ii) On the case $d=2$. We show in Proposition \ref{eopp} that with $d=2$, the annealed pressure is exactly solved and agrees the result obtained in \cite{GGHPb}, where the limit theorems have been proved. Hence, in Sections 4, 5, 6 we only study limit theorems for the case  $d\geq 3$. 

\vspace{0.2 cm}

\noindent (iii) On the similarity to the quenched Ising model. Theorem \ref{ttd} (ii) shows that the annealed Ising model undergoes a phase transition  at the critical inverse temperature $\beta_c = \textrm{atanh}(1/(d-1))$, which is equal to the critical value  of the quenched Ising model. Moreover,  we will prove in Proposition \ref{eopp}   that the annealed  and quenched pressures are actually the same.  As a consequence, all the thermodynamic limits of the  annealed and quenched  models are identical, and these two models should behave  alike. In fact,  limit theorems similar to our results have been proved for quenched model in \cite{GGHPa}.  This similarity has been conjectured in  \cite[Section 1.5.1]{GGHPb}.  

\vspace{0.2 cm}
\noindent (iv) On the generalizations. In Section 6, we study the Ising model on   the configuration model with general degree distributions.  Comparing with  the case of  random regular graphs, we have additionally a source of randomness coming from the sequence of degrees. This randomness makes the problem  much more difficult.  In particular, the annealed pressure  obtained in Proposition \ref{pbbc} is  so complicated that we can not even prove its differentiability. Without the differentiability, we can not go further to the other thermodynamic quantities or limit theorems. 

Another natural question is to generalize our result to the Potts model where the spin of   vertex may take  $q$ values with $q\geq 3$, and the Hamiltonian is proportional to the number of agreeing edges.  Our method  possibly applies  for this  model, but it would require much  work.   The symmetry property  that the measures of configurations with similar structure of spins   are equal   will continue to hold for the Potts model. However, the Hamiltonian of configurations is more complicated than that of Ising model. Indeed, there are now $q(q-1)/2$ types of disagreeing edges instead of $1$ type as in Ising model. Hence a recursive relation between agreeing and disagreeing edges would be much harder than  the one for Ising model obtained  in Section 2.

\vspace{0.2 cm}
\noindent (v) On the organization of the  paper.  In section 2, we give a definition of the configuration model and prove a key lemma for random 1-regular graph  used in the proof of the existence of annealed pressure. In section 3, we study the annealed pressure and prove Theorem \ref{ttd} (i). In Section 4,  we consider the magnetization,  prove Theorem \ref{ttd} (ii) and Theorem \ref{tll}. In Section 5, we prove Theorem \ref{ttd} (iii) and Theorem \ref{tclt}. In Section 6, we prove Proposition \ref{pnl}.  In Section 7,  we prove the existence of the annealed pressure in the case of general configuration models.  Appendix is devoted to prove some technical points of our proofs. 
\section{Preliminaries}
\subsection{Configuration model}  Let us give  a definition following \cite{H} of the  configuration model with prescribed degree sequence.  For each $n$, let  $V_n=\{v_1, \ldots, v_n\}$ be the vertex set of a graph $G_n$, let ${\bf D}= (D_i)_{i \leq n}$ be a sequence of integers.  We  construct the edge set of $G_n$ as follows.	 First, we assume that $\ell_n =\sum_1^n D_i$ is even (if not increase one of the $D_i$'s by $1$,  which makes no difference in what follows). For each vertex $v_i$, start with $D_i$ half-edges incident to $v_i$.  Then we denote by $\kH$ the set of all the half-edges. Select one of them $h_1$ arbitrarily and then choose a half-edge $h_2$ uniformly from $\kH \setminus \{h_1\}$, and match $h_1$ and $h_2$ to form an edge. Next, select arbitrarily another half-edge $h_3$ from $\kH \setminus \{h_1, h_2\}$  and match it to another $h_4$ uniformly chosen from  $\kH \setminus \{h_1, h_2, h_3\}$. Then continue this procedure until there are no more half-edges.  We finally get a multiple random graph that may have self-loops and multiple edges between vertices satisfying the degree of $v_i$ is $D_i$ for all $i$. We denote the obtained graph by $G_n({\bf D})$.

\vspace{0.2 cm}
For $d\geq 1$, if $D_i =d$ for all $i =1, \ldots, n$  we call $G_n({\bf D})$ the {\it random $d$-regular graph}, and denote it by $G_{n,d}$. The random 1-regular graph will be employed several times in the proofs, so we distinguish  its set of vertices  with that of the $G_{n,d}$.  More precisely, we denote by $\bar{V}_m=\{w_1,\ldots,w_m\}$ the set of vertices of $G_{n,1}$. 

\vspace{0.2 cm}
We now explain the role of $G_{n,1}$ in our arguments. We  show in \eqref{hnsg} that the Hamiltonian of a given configuration can be expressed in term of the number of disagreeing edges. By the construction of the configuration model, we have a relation between the number of disagreeing edges of $G_n({\bf D})$ and that of  $G_{\ell_n,1}$ with $\ell_n = D_1 + \ldots + D_n$. More concretely, for $A\subset V_n$, let us denote  
\[e(A,A^c)= \#\{\textrm{edges between $A$ and $A^c$ in } G_n({\bf D}) \}.\]
On the other hand, for each integer $m$, let $\bar{V}_{m} = \{w_1, \ldots, w_{m}\}$ be the vertex set of $G_{m,1}$. For any $1 \leq k \leq m$, we define $\bar{U}_{k}= \{w_1, \ldots, w_{k}\}$,  $\bar{U}_{k}^c=\bar{V}_{m} \setminus \bar{U}_{k}$ and 
\begin{equation} \label{dfox}
X(k,m) = \#\{\textrm{edges between $\bar{U}_{k}$ and $\bar{U}_{k}^c$ in } G_{m,1} \}.
\end{equation}  
It directly follows from  the construction of the configuration model that   
 \begin{equation} \label{hebn}
e(A, A^c) \mathop{=}^{(\kD)} X(\ell_A, \ell_n),  
\end{equation}
where 
\[\ell_A= \sum \limits_{i= 1} ^ n D_i 1(v_i \in A) \hspace{2 cm} \textrm{and} \hspace{2cm} \ell_n = \ell_{V_n}.\]
The relation \eqref{hebn} allows us to reduce problems on disagreeing edges   of configuration models (or Hamiltonian of Ising model) to the one of random 1-regular graphs. 
\subsection{A key lemma on random 1-regular graph}  We will see in \eqref{ehs}  that the generating function of the number of disagreeing edges   plays a central role in  the display of  partition function. Thanks to \eqref{hebn}, we only need study the generating functions of the number of disagreeing edges in random 1-regular graphs. For $k\leq m$, define 
\begin{equation} \label{dfog}
g(\beta,k,m):= \E \Big(\exp \big(-2 \beta X(k,m)  \big) \Big).
\end{equation}
The asymptotic behavior of $g(\beta,k,m)$  is described in the following lemma.
\begin{lem} \label{lgb}
For all $\beta \geq 0$,  there exists a positive constant $C=C(\beta)$, such  that for all $m$ large enough the following assertions hold.
\begin{itemize}
\item[(i)] For all $0 \leq k \leq \ell \leq m$,
 \[ \Big|[\log g(\beta, k,m)- m F(k/m)] - [\log g(\beta, \ell,m)- m F(\ell/m)] \Big | \leq \frac{C|k-\ell|}{m}.
\] 
\item[(ii)] We have
\[\max_{0 \leq k \leq m} \,\,\vline\frac{\log g(\beta,k,m)}{m}- F(k/m)  \, \vline \, \leq \frac{C}{m}, \]
\end{itemize}
with  $F(t)  $ as in Theorem \ref{ttd}.  
\end{lem}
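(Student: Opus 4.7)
The plan uses an explicit formula for the law of $X(k,m)$ combined with a careful Laplace asymptotic. For $m$ even, the random $1$-regular graph on $\bar V_m$ is a uniformly random perfect matching, so counting the matchings with $j$ crossing edges between $\bar U_k$ and $\bar U_k^c$ gives
\[
\pp(X(k,m)=j)=\frac{\binom{k}{j}\binom{m-k}{j}\,j!\,(k-j-1)!!\,(m-k-j-1)!!}{(m-1)!!},
\]
valid for $j\equiv k \pmod 2$ with $0\le j\le \min(k,m-k)$. Set $a=e^{-2\beta}$ and apply the sharp Stirling expansion $\log n!=n\log n-n+\tfrac12\log(2\pi n)+O(1/n)$ uniformly to all factorials (using $(2\ell-1)!!=(2\ell)!/(2^\ell\ell!)$ for the double factorials). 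This rewrites the $j$-th summand of $g(\beta,k,m)=\sum_{j}a^{j}\pp(X(k,m)=j)$ as $\exp(m\Psi(j/m,k/m)+O(\log m))$, where
\[
\Psi(s,t)=t\log t+(1-t)\log(1-t)-s\log s-\tfrac{t-s}{2}\log(t-s)-\tfrac{1-t-s}{2}\log(1-t-s)+s\log a.
\]

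The next step is to run Laplace's method on this sum. For each $t=k/m\in(0,1)$ the function $\Psi(\cdot,t)$ is strictly concave, with a unique interior critical point $s^{*}(t)$ determined by $\partial_{s}\Psi=0$, i.e.\ $s^{2}=a^{2}(t-s)(1-t-s)$. A Gaussian expansion at this saddle, with a harmless multiplicative constant from the parity restriction $j\equiv k\pmod 2$, upgrades Stirling's $O(\log m)$ error to $O(1/m)$ and yields
\[
\log g(\beta,k,m)=m\,\Psi(s^{*}(k/m),k/m)+c(k/m)+O(1/m),
\]
in which $c:(0,1)\to\R$ is a smooth, bounded function assembled from the Gaussian prefactor and the subleading Stirling constants. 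The rate function is identified with $F$ as follows. Since $\Psi(s,1-t)=\Psi(s,t)$, matching $F(t)=F(1-t)$, it is enough to work on $t\in[0,1/2]$, where $u(t)=t$ and $F(t)=\int_{0}^{t}\log f(s)\,ds$. Both $\Psi(s^{*}(0),0)$ and $F(0)$ vanish, so by the envelope theorem the identification reduces to
\[
\frac{d}{dt}\Psi(s^{*}(t),t)=\log\!\Big(\tfrac{t}{1-t}\Big)+\tfrac12\log\!\Big(\tfrac{1-t-s^{*}}{t-s^{*}}\Big)=\log f(t),
\]
which is a short algebraic manipulation once $s^{*}$ is eliminated via $s^{*2}=a^{2}(t-s^{*})(1-t-s^{*})$; in particular, the substitution $s^{*}=2at(1-t)/(R+a)$ with $R=\sqrt{1+(a^2-1)(1-2t)^2}$ gives $2at(1-t-s^{*})/s^{*}=R+a(1-2t)$, reproducing the numerator of $f(t)$.

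Given this refined asymptotic, assertion (ii) is immediate. For (i), applying the expansion at $k$ and at $\ell$ yields
\[
[\log g(\beta,k,m)-mF(k/m)]-[\log g(\beta,\ell,m)-mF(\ell/m)]=c(k/m)-c(\ell/m)+O(1/m),
\]
and Lipschitz continuity of $c$ on compact subintervals of $(0,1)$ turns the right-hand side into the announced $C|k-\ell|/m$ bound. The main technical obstacle is making the Laplace step uniform in $k$: when $t=k/m$ is close to $0$ or $1$, the maximizer $s^{*}$ approaches the boundary of its natural range and the quadratic expansion degenerates, so the constants in the Gaussian step explode. I would treat that regime separately, using the exact identity $g(\beta,0,m)=1$ together with the symmetry $g(\beta,k,m)=g(\beta,m-k,m)$ to reduce $t$ near $1$ to $t$ near $0$, and bounding the short small-$k$ sums by direct combinatorial estimates before patching them to the interior asymptotic.
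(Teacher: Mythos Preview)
Your approach is correct in the interior and genuinely different from the paper's. The paper never writes down the distribution of $X(k,m)$; instead it derives a three-term recursion for $g(\beta,k,m)$ in $k$ (by conditioning on the partner of one half-edge), passes to the ratio $h(k,m)=g(\beta,k,m)/g(\beta,k-1,m)$, and proves the uniform pointwise bound $|h(k,m)-f((k-1)/m)|\le\varkappa/m$ by comparing the recursion to the fixed-point equation that defines $f$. The desired estimates then follow by telescoping $\log g=\sum\log h$ and a Riemann-sum comparison with $F$. Your route---exact matching count, sharp Stirling, discrete Laplace at the saddle $s^*$, envelope theorem to identify $\Psi(s^*(t),t)=F(t)$---is more systematic and makes the appearance of $f$ transparent (the saddle equation $s^2=a^2(t-s)(1-t-s)$ and the identity $\partial_t\Psi=\log f$ are exactly the variational characterization hidden behind the paper's fixed-point manipulation). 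The paper's method buys elementarity: no Laplace machinery, and the key approximation $h\approx f$ is uniform over \emph{all} $1\le k\le[m/2]$ by design, so part~(i) drops out immediately for every pair $k,\ell$.

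The one place your argument is not yet a proof is precisely the boundary regime you flag. Your Laplace expansion gives $\log g=mF(t)+c(t)+O(1/m)$ with $c$ smooth on compact subsets of $(0,1)$, but as $t\downarrow0$ one has $s^*(t)\sim t$ and $t-s^*(t)\sim t^2/a^2$, so the Hessian $\partial_s^2\Psi(s^*,t)$ blows up like $1/t^2$ and the subleading Stirling corrections diverge too; these divergences \emph{should} cancel in $c(t)$, but you have not shown this, nor that $c$ has a uniform Lipschitz bound on all of $(0,1)$. Without that, the step ``Lipschitz continuity of $c$ turns the right-hand side into $C|k-\ell|/m$'' only works for $k,\ell$ bounded away from $0$ and $m$, and your patching sketch (direct estimates for small $k$) does not yet produce the \emph{increment} bound (i) across the seam between the small-$k$ and Laplace regimes. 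The paper's recursive argument sidesteps this entirely: the bound on $h(k,m)$ is proved for every $k$ up to $[m/2]$ (with a short separate argument near $[m/2]$), so no patching is needed.
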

\begin{proof} We  observe that $g(\beta,0,m)=1$ and $F(0)=0$. Hence, (ii) is a direct consequence of (i).  We first claim that  to prove  (i), it suffices to show  
\begin{equation} \label{lmn}
\textrm{(i) holds for all } 0 \leq k \leq \ell \leq [m/2].
\end{equation}
Indeed, we observe that for all $0 \leq k \leq m$,
\[X(k,m)\mathop{=}^{(\kD)}X(m-k,m).\]
Thus 
\begin{equation} \label{gdx}
g(\beta, k, m) = g(\beta, m-k,m).
\end{equation}
Moreover,  we have $F(t)=F(1-t)$ for all $t \in [0,1]$. Hence for all $k \leq m$,
\begin{equation} \label{fdx}
F\left(\frac{k}{m} \right) = F\left( \frac{m-k}{m}\right). 
\end{equation}
Combining \eqref{gdx} and \eqref{fdx}, we get that for $0 \leq k \leq [m/2] < \ell \leq m$,
\begin{eqnarray*}
&&\Big|[\log g(\beta, k,m)- m F(k/m)] - [\log g(\beta, \ell,m)- m F(\ell/m)] \Big | \\
&=& \Big|[\log g(\beta, k,m)- m F(k/m)] - \left[\log g(\beta, m- \ell,m)- m F \left(\frac{m-\ell}{m}\right) \right] \Big |\\
&\leq& \frac{C|k-(m-\ell)|}{m} \leq \frac{C|k-\ell|}{m},
\end{eqnarray*} 
by using \eqref{lmn} for $0 \leq k, m- \ell \leq  [m/2]$. Hence (i) holds for $0 \leq k \leq [m/2] < \ell \leq m$. Similarly, we can also prove that  (i) holds for $[m/2] \leq k \leq \ell \leq m$, and thus (i) follows. 

\vspace{0.2 cm}
\noindent We now prove  \eqref{lmn}. The demonstration of \eqref{lmn} is long and divided into four parts: recursive formula for $g(k,m)$;   reduced  sequence of $g(k,m)$; approximation of the reduced sequence, and conclusion.

\noindent {\bf I. Recursive formula}.   We claim that for all $k \leq [m/2]$,
\begin{equation} \label{xkn}
X(k,m) \mathop{=}^{(\kD)}  
\begin{cases}
X(k-2,m-2) & \textrm{ with prob. } (k-1)/(m-1)\\
1+ X(k-1,m-2) & \textrm{ with prob. } (m-k)/(m-1).
\end{cases}
\end{equation}
Indeed, we remind the construction of the random $1$-regular graph:  to each vertex in  $\bar{V}_m$ we attach  an half-edge, then we pair these half-edges uniformly. Let us denote by $\bar{\kU}_k$ (resp. $\bar{\kU}_k ^c$)  the set of half-edges that incident to $\bar{U_k}$ (resp. $\bar{U}_k^c$).   Suppose that we start the procedure of pairing half-edges with an element in $\bar{\kU}_k$, say $h_1$.  Then there are two possibilities. First,  with probability $(m-k)/(m-1)$, the half-edge $h_1$ is paired with an element in  $\bar{\kU}_k^c$.  This paring  gives an edge between $\bar{U}_k$ and $\bar{U}_k^c$. After this step, there remains $m-2$ half-edges including $k-1$ ones belonging to $\bar{\kU}_k$. Hence $X(k,m)$ has the same law as $1+X(k-1,m-2)$. Secondly,  with probability $(k-1)/(m-1)$, the  half-edge $h_1$ is paired with an element in $\bar{\kU}_k$, and  that does not give an edge between $\bar{U}_k$ and $\bar{U}_k^c$. Thus after this step,   $X(k,m)$ has the same law as $X(k-2,m-2)$.  

\vspace{0.2 cm}
Now applying  \eqref{xkn}, we obtain 
\begin{eqnarray} \label{trh1}
g(\beta,k,m) &=& \en \big(e^{-2 \beta X(k,m)}\big) \notag \\
&= &  \frac{k-1 }{m-1} \en \big(e^{-2 \beta X(k-2,m-2)}\big)  + \frac{m-k}{m-1} \en \big(e^{-2 \beta [1+X(k-1,m-2)]} \big) \notag  \\
&=&  \frac{k-1}{m-1} g(\beta, k-2,m-2) + \frac{(m-k)e^{-2 \beta}}{m-1} g(\beta, k-1,m-2). 
\end{eqnarray} 
As for \eqref{xkn}, starting   with an half-edge in $\bar{\kU}_k^c$, we get 
\begin{equation*} 
X(k,m) \mathop{=}^{(\kD)}  
\begin{cases}
X(k,m-2) & \textrm{ with prob. } (m-k-1)/(m-1)\\
1+ X(k-1,m-2) & \textrm{ with prob. } k/(m-1).
\end{cases}
\end{equation*}
Hence 
\begin{eqnarray} \label{trh2}
g(\beta,k,m) &=& \frac{(m-k-1)}{m-1} g(\beta, k,m-2) + \frac{k e^{-2 \beta}}{m-1} g(\beta, k-1,m-2). 
\end{eqnarray} 
It follows from  \eqref{trh1} and \eqref{trh2}  that  
\begin{equation} \label{gkn}
g(\beta,k,m-2)= \frac{(m-2k)e^{-2 \beta}}{m-k-1} g(\beta, k-1,m-2) + \frac{k-1}{m-k-1} g(\beta, k-2,m-2).
\end{equation}
We replace $m-2$ by $m$ in \eqref{gkn} and obtain a recursive formula
\begin{equation} \label{ngkn}
g(\beta,k,m)= \frac{(m-2k+2)e^{-2 \beta}}{m-k+1} g(\beta, k-1,m) + \frac{k-1}{m-k+1} g(\beta, k-2,m).
\end{equation} 
{\bf II. Reduced sequence}. Define for all $1 \leq i\leq m$,
\[h(\beta,i,m)= \frac{g(\beta,i,m)}{g(\beta,i-1,m)}.\]
Then we have 
\begin{equation} \label{gvh}
\log g(\beta,k,m) =\sum_{i=1}^k \log h(\beta,i,m).
\end{equation}
Moreover by \eqref{ngkn}, 
\begin{equation} \label{hkn}
h(\beta,k,m)= \frac{(m-2k+2)e^{-2 \beta}}{m-k+1}  + \frac{k-1}{(m-k+1) h(\beta, k-1,m)}.
\end{equation} 
Observe that $g(\beta,0,m)=1$ and $g(\beta,1,m)=e^{-2\beta}$, since $X(0,m)=0$ and $X(1,m)=1$. Thus $h(\beta,1,m)=e^{-2\beta}$. For simplicity, we  remove the notation $\beta$ in the function $h$ and denote  
$$c= e^{-2\beta} \in (0,1).$$
 Then $h(1,m)= c$. Moreover, by replacing $k$ by $k+1$ in \eqref{hkn}, we get 
 \begin{equation} \label{nhkn}
h(k+1,m)= \frac{c(m-2k) }{m-k}  + \frac{k}{(m-k) h( k,m)}.
\end{equation} 
{\bf III. Approximation of $h( k,m)$}. By numerical analysis, we find that $h(k+1,m)$ and $h(k,m)$ are very close when $m$ tends to infinity. Hence, it is natural to expect that $h(k,m)$ is approximated by the solution of the fixed point equation
 \begin{equation*} 
\theta_k= \frac{c(m-2k) }{m-k}  + \frac{k}{(m-k) \theta_k}.
\end{equation*} 
Going further to approximate  the  sequence $h(k,m)$, we  consider the following functional equation
\begin{equation} \label{eqx}
\theta= \frac{c(1-2t) }{1-t}  + \frac{t}{\theta(1-t)}.
\end{equation} 
The positive solution of this equation is 
\begin{equation} \label{rex}
\theta=f(t):= \frac{c(1-2t)+ \sqrt{1+(c^2-1)(1-2t)^2}}{2(1-t)}.
\end{equation}
We claim the following estimates on $f(t)$ and $h(k,m)$. 
\begin{itemize}
\item[$\bullet$] For all $t \in [0,1/2]$,
\begin{equation} \label{chfm}
 c \leq f(t) \leq1. 
\end{equation} 
\item[$\bullet$] There exists a positive constant $A=A(\beta)\geq 1$, such that for all $t \in (0,1/2)$
\begin{equation} \label{fpc}
1/A \leq  f'(t) \leq A \qquad \textrm{ and }  \qquad |f''(t)| \leq A. 
\end{equation} 
\item[$\bullet$] There exists a positive constant $\varkappa$, such that for all $m$ and  $0 \leq k  \leq [m/2]$,
\begin{equation} \label{fhf}
 \Big | h(k,m) -f \left(\frac{k-1}{m}\right) \Big | \leq \frac{\varkappa}{m}.
\end{equation}
\end{itemize}
Note that the bound for $f''(t)$ in \eqref{fpc} is not used in  the proof of \eqref{lmn}, but it is  needed for the proof of \eqref{fhf}. The proof of \eqref{chfm}, \eqref{fpc} and \eqref{fhf} is long and complicated, so we put it in Appendix.

\noindent {\bf IV. Conclusion}. Assuming these claims \eqref{chfm}, \eqref{fpc}, \eqref{fhf}, we now prove \eqref{lmn}. By \eqref{chfm}  and \eqref{fhf}, we have for all $m$ large enough and $0 \leq k \leq [m/2]$,
\begin{equation} \label{hvkc}
c/2 \leq \min \{ h(k,m), f(k/m) \}.
\end{equation}
 Using the mean value theorem, we have for all $x,y >0$,
\begin{equation} \label{lab}
|\log x- \log y| \leq \frac{|x-y|}{\min \{x,  y \} }.
\end{equation}
Using \eqref{gvh},  \eqref{fhf},  \eqref{hvkc} and \eqref{lab}, we get that for all $0  \leq k \leq \ell  \leq [m/2]$,
\begin{eqnarray}
\vline \, \log g(k,m) - \log g(\ell,m) + \sum_{i=k+1}^{\ell} \log f\Big( \frac{i-1}{m} \Big)\,\vline
&=& \vline \, \sum_{i=k+1}^{\ell} \big[\log h(i,m) - \log f((i-1)/m)\big] \, \vline\notag \\
 & \leq & \sum_{i=k+1}^{\ell}  \Big |  \log h(i,m) -   \log f((i-1)/m) \Big | \notag   \\
& \leq &  \frac{2}{c}\sum_{i=k+1}^{\ell } \Big| h(i,m)-f((i-1)/m) \Big | \notag\\
& \leq &  \frac{2\varkappa(\ell-k)}{cm}. \label{ltrk}
\end{eqnarray}
Similarly,
\begin{eqnarray} \label{ftf}
\vline \,\frac{\log f(i/m)}{m} - \int_{i/m}^{(i+1)/m} \log f(s) ds \, \vline &\leq& \int_{i/m}^{(i+1)/m} | \log f(i/m) - \log f(s)| ds \notag   \\
 &\leq& \frac{2}{c} \int_{i/m}^{(i+1)/m} | f(i/m) - f(s)| ds \notag   \\
&\leq & \frac{2A}{c} \int_{i/m}^{(i+1)/m} |(i/m) - s| ds \notag \\
&=& \frac{A}{m^2c}.
\end{eqnarray}
Here for the third inequality, we have used \eqref{fpc} and the mean value theorem. It follows from  \eqref{ltrk} and \eqref{ftf} that for all $0 \leq k\leq \ell \leq [m/2]$,
 \[\vline \, \log g(k,m) - \log g(\ell,m) + m \int_{k/m}^{\ell/m} \log f(s) ds \, \vline \leq \left( \frac{2\varkappa+A}{c}  \right) \left( \frac{ \ell - k}{m}\right),\]
 which proves Lemma \ref{lgb} (i).  
\end{proof}

\subsection{An auxiliary  lemma} The following  result  will be used  in the proof  of the existence of the annealed pressure.
\begin{lem} \label{lpre} The following assertions hold.
\begin{itemize}
\item[(i)] Let $G(t)$ be a continuous function on $[0,1]$. Then
\[\lim_{n \rightarrow \infty} \max \limits_{0 \leq j \leq n} G(j/n)= \max\limits_{0 \leq t \leq 1} G(t).\] 
\item[(ii)] Let $(G_n(t))_n$ be a sequence of  functions on $[0,1]$,  which converges point-wise  to a fucntion $G (t)$. Suppose that there exists a positive constant $C$ and a sequence $(\varepsilon_n)$ tending to $0$, such that for all $0 \leq s,t \leq 1$ and $n \geq 1$,
\begin{equation*} \label{gnst}
|G_n(s)-G_n(t)| \leq C|s-t| + \varepsilon_n.
\end{equation*}
 Then $G(t)$ is a Lipschitz function. Moreover,  for any continuous function $H(t)$ on $[0,1]$, we have 
 \[\lim_{n \rightarrow \infty} \max \limits_{0 \leq j \leq n} [ H(j/n) +G_n(j/n)]= \max\limits_{0 \leq t \leq 1} [H(t)+G(t)].\] 
\end{itemize}
\end{lem}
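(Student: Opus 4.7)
The plan is to prove (i) by the standard uniform-continuity argument on the compact interval $[0,1]$, and then for (ii) to first upgrade the regularity of the limit $G$ from the asymptotic equicontinuity hypothesis before handling the maximum.

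For (i), the inequality $\max_{0 \leq j \leq n} G(j/n) \leq \max_{0 \leq t \leq 1} G(t)$ is immediate. Conversely, given $\varepsilon > 0$, uniform continuity of $G$ on $[0,1]$ furnishes $\delta > 0$ with $|G(s) - G(t)| < \varepsilon$ whenever $|s - t| < \delta$; for $n > 1/\delta$, every $t \in [0,1]$ lies within $1/n < \delta$ of some grid point $j/n$, giving $\max_j G(j/n) \geq \max_t G(t) - \varepsilon$, and $\varepsilon \searrow 0$ concludes.

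For (ii), the Lipschitz property of $G$ follows by fixing $s,t \in [0,1]$ and passing to the limit $n \to \infty$ in $|G_n(s) - G_n(t)| \leq C|s-t| + \varepsilon_n$, using pointwise convergence of $G_n(s)$, $G_n(t)$ and $\varepsilon_n \to 0$; this yields $|G(s) - G(t)| \leq C|s-t|$, so $G$ is continuous and $H+G$ attains its maximum, call it $M$, at some $t^\star \in [0,1]$. The technical heart of the argument is the observation that whenever $j_n/n \to t \in [0,1]$,
\[
|G_n(j_n/n) - G(t)| \leq |G_n(j_n/n) - G_n(t)| + |G_n(t) - G(t)| \leq C|j_n/n - t| + \varepsilon_n + |G_n(t) - G(t)| \longrightarrow 0,
\]
by the triangle inequality, the asymptotic equicontinuity bound applied at $s = j_n/n$, and pointwise convergence at the fixed point $t$. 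For the lower bound, I would choose $j_n$ to be the nearest integer to $n t^\star$, so that $j_n/n \to t^\star$, and combine the display above with continuity of $H$ to obtain $H(j_n/n) + G_n(j_n/n) \to H(t^\star) + G(t^\star) = M$, giving $\liminf_n \max_j [H(j/n) + G_n(j/n)] \geq M$. For the upper bound, I would let $j_n$ achieve the discrete maximum, extract a subsequence with $j_n/n \to t^\dagger \in [0,1]$ by compactness, and apply the same display to deduce $H(j_n/n) + G_n(j_n/n) \to H(t^\dagger) + G(t^\dagger) \leq M$ along the subsequence, so $\limsup_n \max_j [H(j/n) + G_n(j/n)] \leq M$.

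The main (though mild) obstacle is exactly this last point: pointwise convergence of $G_n$ to $G$ alone does not permit evaluating $G_n$ at an argument $j_n/n$ that moves with $n$, and it is the asymptotic equicontinuity hypothesis that upgrades pointwise to the needed joint convergence, playing here the role of an Arzel\`a-Ascoli type compactness argument tailored to a moving evaluation point.
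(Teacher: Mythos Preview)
Your proof is correct and complete in all essential respects. The paper itself does not give a proof of this lemma at all: it simply states that ``the results of this lemma are standard in real analysis, so we safely leave to the reader,'' so your argument is precisely the kind of standard uniform-continuity / subsequence extraction the authors had in mind.

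One cosmetic point worth tightening in part (ii): in the upper-bound step you extract \emph{one} subsequence along which $j_n/n \to t^\dagger$ and conclude $\limsup_n \max_j [H(j/n)+G_n(j/n)] \le M$. As written this is a small jump, since convergence along a single subsequence to something $\le M$ does not by itself bound the $\limsup$ of the full sequence. The fix is the usual one: either begin by passing to a subsequence that realizes the $\limsup$ and then extract a further subsequence by compactness, or note that your argument applies verbatim to every subsequence, so every subsequence has a further subsequence with limit $\le M$, which forces $\limsup \le M$. This is routine and does not affect the substance of your argument.
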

The results of  this lemma are standard in real analysis, so  we safely leave to the reader. 
\subsection{Notation}
If $f $ and $g$ are two real functions, we write $f= \mathcal{O}(g)$ if there exists a constant $C>0,$ such that $f(x) \leq C g(x)$ for all $x ;$  $f \asymp g $ if $f= \mathcal{O}(g)$ and $g= \mathcal{O}(f);$  $f=o(g)$ if $g(x)/f(x) \rightarrow 0$ as $x \rightarrow \infty$.

\noindent Let $(f(j,n))_{1 \leq j \leq n }$ and $(g(j,n))_{1 \leq j \leq n}$ be two sequences of real numbers. The notion $f(j,n)= \kO(g(j,n))$ (or $f(j,n)= o(g(j,n))$) is taken uniformly in all $j \leq n$.

\noindent For any real number $x$, let $[x]$ denote the integer part of $x$.
\section{The annealed pressure}
The first step (which is one of  the most important steps) in studying the Ising model is the task of understanding the partition function and the pressure. As mentioned in the introduction, we will write the Hamiltonian in term of the number of disagreeing edges. Then using  the symmetry of  random regular graphs, we can investigate the annealed pressure.   Let us be more precise now.  

We fix an integer $d \geq 2$. Then for any positive integer $n$, we consider the random $d$-regular graph whose the vertex set is $V_n= \{v_1, \ldots, v_n\}$.   For any spin configuration $\sigma \in \Omega_n$,  define 
\[\sigma_+= \{v_i: \sigma_i=1\} \qquad \textrm{and} \qquad \sigma_-= \{v_i: \sigma_i=-1\}.\]
Then
\begin{eqnarray*}
\sum_{i=1}^n \sigma_i &= &2|\sigma_+| -n, \\
\sum_{i\leq j} k_{i,j} \sigma_i \sigma_j &=& (dn)/2 - 2e(\sigma_{+}, \sigma_{-}),
\end{eqnarray*}
where  
$$e(\sigma_{+}, \sigma_{-}) = \#\{\textrm{ edges between   $\sigma_{+}$ and $ \sigma_{-}$}\}.$$ 
Therefore
\begin{equation} \label{hnsg}
H_n(\sigma) =   \left(B-\frac{\beta d}{2} \right)n + 2\beta e(\sigma_{+}, \sigma_{-}) - 2 B |\sigma_{+}|.
\end{equation}
Thus 
\begin{equation} \label{ehs}
\E\left(e^{-H_n(\sigma)}\right)= e^{ \left(\frac{\beta d}{2} - B\right)n} \E \left( e^{ - 2\beta e(\sigma_{+}, \sigma_{-})} \right)  e^{2 B |\sigma_{+}|}.
\end{equation}
By \eqref{hebn}, if $|\sigma_+|= |\sigma'_+|$ then 
\[  e(\sigma_{+}, \sigma_{-}) \mathop{=}^{(\kD)} e(\sigma'_{+}, \sigma'_{-})  \mathop{=}^{(\kD)} X(d|\sigma_+|, dn)). \]
   Hence 
\begin{eqnarray}
 && \sum_{\sigma \in \Omega_n} \E \left( e^{ - 2\beta e(\sigma_{+}, \sigma_{-})} \right)  e^{2 B |\sigma_{+}|} = \sum_{j=0}^n e^{2 Bj} \sum_{\substack{\sigma \in \Omega_n \\ |\sigma_+| =j}} \E \left( e^{ - 2\beta e(\sigma_{+}, \sigma_{-})} \right)    \notag \\
 &=&  \sum_{j=0}^n \binom{n}{j} e^{2Bj}\E \left(e^{-2 \beta X(dj,dn)}\right) = \sum_{j=0}^n \binom{n}{j} e^{2Bj}g(\beta,dj,dn),
\end{eqnarray}
with  $g(\beta,k,m)$ defined as in \eqref{dfog} for all $k\leq m$.
 Therefore
\begin{equation} \label{ezn}
\E(Z_n(\beta,B))= e^{ \left(\frac{\beta d}{2} - B\right)n} \times \sum_{j=0}^n \binom{n}{j} e^{2Bj} g(\beta,dj,dn). 
\end{equation}

\vspace{0.2 cm}
\noindent {\it Proof of Theorem \ref{ttd} (i).} 
By \eqref{ezn}, we have
\begin{equation*} \label{eznbb}
\frac{1}{n} \log\E(Z_n(\beta,B))= \frac{\beta d}{2} -B + \max_{0 \leq j \leq n} \left[\frac{\log \binom{n}{j}}{n}+ 2B \frac{j}{n} + \frac{\log g(\beta,dj,dn)}{n}\right] +o(1).
\end{equation*}
On the other hand, it follows from  Stirling's formula that  
\begin{equation*} \label{bnjm}
\frac{\log \binom{n}{j}}{n}= \frac{j}{n} \log \left(\frac{n}{j}\right) +\frac{n-j}{n} \log \left(\frac{n}{n-j}\right) +o(1).
\end{equation*} 
Combining the last two equations   and Lemma \ref{lgb} (ii), we obtain
\begin{equation} \label{eznbb}
\frac{1}{n} \log\E(Z_n(\beta,B))= \frac{\beta d}{2} -B + \max_{0 \leq j \leq n}  L(j/n) + o(1),
\end{equation}
where $L(t)$ is a continuous function on $[0,1]$ defined by 
\[L(t)= -t \log (t) + (t-1) \log (1-t) + 2 Bt + d F(t). \]
Now, the result follows from \eqref{eznbb} and Lemma \ref{lpre} (i). \hfill $\square$

\vspace{0.3  cm}
An explicit formula for the function $F(t)$ is given in the following lemma.
\begin{lem} \label{lofl} For $t\leq 1/2$, we have 
\begin{eqnarray*}  
F(t) &= & t \log f(t) +  \frac{1}{2} \log (1-t) + \ \frac{1}{2}\log (1+e^{-2 \beta})   + \frac{1}{2}\log \left[ 1+ \frac{e^{-2 \beta}(2t-1)}{(1-t)(f(t)+1)} \right].
\end{eqnarray*}
For  $t\in (1/2,1)$, we have $F(t)=F(1-t)$.
\end{lem}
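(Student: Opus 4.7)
The plan is to reduce the lemma to a one-variable algebraic identity via integration by parts and the change of variables $u = f(s)$. The case $t \in (1/2, 1)$ is immediate from the definition $u(t) = \min\{t, 1-t\}$, since then $F(t) = \int_0^{1-t} \log f(s)\,ds = F(1-t)$. So I restrict attention to $t \leq 1/2$, where $u(t) = t$ and integration by parts gives
\[
F(t) = t \log f(t) - \int_0^t s\,\frac{f'(s)}{f(s)}\, ds.
\]
The task reduces to evaluating this integral in closed form.

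The key observation is that $f$ satisfies the functional equation \eqref{eqx}, which after clearing denominators is the quadratic relation $(1-s)\, f(s)^2 - c(1-2s)\, f(s) - s = 0$ with $c := e^{-2\beta}$. Solving this quadratic for $s$ in terms of $f$ yields
\[
s = \frac{f(s)\,[f(s)-c]}{f(s)^2 - 2c f(s) + 1}, \qquad \frac{s}{f(s)} = \frac{f(s) - c}{f(s)^2 - 2c f(s) + 1}.
\]
Since $f$ is smooth and strictly increasing on $[0,1/2]$ by \eqref{fpc}, with $f(0) = c$ (evident from the explicit formula for $f$), the substitution $u = f(s)$ transforms the remaining integral into
\[
\int_c^{f(t)} \frac{u - c}{u^2 - 2c u + 1}\, du = \tfrac{1}{2}\log\frac{f(t)^2 - 2c f(t) + 1}{1 - c^2},
\]
because the integrand is half the logarithmic derivative of $u^2 - 2c u + 1$. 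Combining, one obtains
\[
F(t) = t \log f(t) + \tfrac{1}{2}\log(1 - c^2) - \tfrac{1}{2}\log\bigl[f(t)^2 - 2c f(t) + 1\bigr].
\]

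All that remains is algebraic bookkeeping to rewrite this in the stated form. After pulling out $\tfrac{1}{2}\log(1+c)$ and exponentiating, the target identity reduces, with $f := f(t)$, to
\[
(1-c)(f+1) = (f^2 - 2c f + 1)\bigl[(1-t)(f+1) + c(2t-1)\bigr].
\]
I plan to deduce this from two short consequences of the quadratic for $f$, namely $(1-t)(f^2 - 2c f + 1) = 1 - c f$ and $(2t-1)(f^2 - 2c f + 1) = f^2 - 1$; substituting these collapses the right-hand side to $(1 - c f)(f+1) + c(f-1)(f+1) = (f+1)(1-c)$, matching the left-hand side. The only moment of genuine choice is spotting the substitution $u = f(s)$; once that is made, the quadratic satisfied by $f$ supplies every simplification in turn, and the rest is mechanical.
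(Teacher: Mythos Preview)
Your proof is correct and takes a genuinely different route from the paper. Both arguments begin with the same integration by parts $F(t)=t\log f(t)-\int_0^t s f'(s)/f(s)\,ds$, but then diverge. The paper computes $sf'(s)/f(s)$ directly from the explicit formula $f=A/B$, obtaining
\[
\frac{sf'(s)}{f(s)}=\frac{1}{2(1-s)}-\frac{e^{-2\beta}}{2(1-s)\sqrt{1+(e^{-4\beta}-1)(2s-1)^2}},
\]
integrates the first term to produce the $\tfrac12\log(1-t)$ piece at once, and then evaluates the remaining integral through a chain of three substitutions ($u=1-2s$, $v=\arcsin(\alpha u)$, $w=\tan(v/2)$) before a fair amount of algebraic simplification. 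Your approach instead inverts the quadratic $(1-s)f^2-c(1-2s)f-s=0$ to write $s/f=(u-c)/(u^2-2cu+1)$ under $u=f(s)$, reducing the integral to a single logarithmic antiderivative; the stated form then falls out from two short consequences of that same quadratic. Your method is shorter and more structural, exploiting the defining functional equation rather than the radical formula; the paper's route has the minor cosmetic advantage that the $\tfrac12\log(1-t)$ term appears naturally rather than emerging from the final algebraic rearrangement.
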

 The quenched pressure $\tilde{\psi}(\beta, B)$ has   been determined in \cite[Theorem 2.4]{DM}. The equality between the annealed and quenched pressures is established in the following proposition.  
\begin{prop} \label{eopp} For all $\beta >0$ and $B \in \R$, we have $\psi(\beta, B)=\tilde{\psi}(\beta, B)$. In particular, when $d=2$, 
\[\psi(\beta, B) = \beta + \log \left( \cosh (B) + \sqrt{ \sinh ^2 (B) + e^{-4 \beta}} \right), \]
which agrees with the result  obtained in \cite{GGHPb}. 
\end{prop}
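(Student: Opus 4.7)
The plan is to perform the optimization appearing in Theorem \ref{ttd}(i) explicitly and match the resulting closed form with the expression for $\tilde{\psi}(\beta, B)$ given in \cite[Theorem 2.4]{DM}. Write $L(t) := (t-1)\log(1-t) - t\log t + 2Bt + dF(t)$, so $\psi(\beta, B) = \beta d/2 - B + \max_{t \in [0,1]} L(t)$. Since $F(t) = F(1-t)$ by Lemma \ref{lofl}, the substitution $t \mapsto 1-t$ gives $L(1-t) - L(t) = 2B(1-2t)$, so for $B \geq 0$ the global maximizer lies in $[1/2, 1]$. Setting $s = 1-t$ and $L^-(s) := (s-1)\log(1-s) - s\log s - 2Bs + dF(s)$, we have $\max_{t \in [0,1]} L(t) = 2B + \max_{s \in [0,1/2]} L^-(s)$. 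Using $F'(s) = \log f(s)$ on $[0, 1/2]$, the stationarity condition for $L^-$ at the interior critical point $s^*$ reads
\begin{equation*}
\frac{s^*}{1-s^*} \;=\; e^{-2B} f(s^*)^{d}.
\end{equation*}
Concavity of $L^-$ on $(0, 1/2)$, verifiable from \eqref{fpc} and \eqref{eqx}, ensures that $s^*$ is unique and achieves the supremum.

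Substituting the critical equation back into $L^-(s^*)$ and using the explicit formula from Lemma \ref{lofl} for $F(s^*)$ yields a closed form for $\psi(\beta, B)$. To identify this with $\tilde{\psi}(\beta, B)$, the plan is to reparametrize $s^*$ through an effective field $h^*$ via $1 - 2s^* = \tanh h^*$: this transforms the critical equation into the Bethe fixed-point equation on the infinite $d$-regular tree that underlies \cite[Theorem 2.4]{DM}, after which the two closed-form expressions coincide after routine algebraic simplification.

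For $d = 2$, the computation can be carried out explicitly. Setting $c := e^{-2\beta}$, the critical equation reads $f(s^*)^2 = s^* e^{2B}/(1-s^*)$. Combining this with the quadratic identity $(1-s^*) f(s^*)^2 - c(1-2s^*)f(s^*) - s^* = 0$ coming from \eqref{eqx}, and eliminating $f(s^*)$, gives
\begin{equation*}
(1-2s^*)^2 \;=\; \frac{\sinh^2 B}{\sinh^2 B + c^2}, \qquad f(s^*) \;=\; \frac{e^{B}(S - \sinh B)}{c}, \qquad S := \sqrt{\sinh^2 B + c^2}.
\end{equation*}
Substituting these values into $L^-(s^*)$ and expanding $2 F(s^*)$ through Lemma \ref{lofl}, the coefficient of $s^*$ equals the left-hand side of the stationarity relation and therefore vanishes, leaving
\begin{equation*}
L^-(s^*) \;=\; \log(1+c) + \log\frac{c e^{-B} + S + \sinh B}{c e^{B} + S + \sinh B}.
\end{equation*}
Cross-multiplying and invoking $\cosh B + \sinh B = e^B$ to cancel the remaining terms pairwise, the ratio collapses to $(\cosh B + S)/[e^B(1+c)]$, whence $\psi(\beta, B) = \beta d/2 - B + 2B + L^-(s^*) = \beta + \log(\cosh B + S)$, as claimed. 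The case $B < 0$ follows from the symmetry $\psi(\beta, B) = \psi(\beta, -B)$ inherited from $Z_n(\beta, B) = Z_n(\beta, -B)$.

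The main obstacle is the general-$d$ identification: matching our critical equation with the Bethe fixed-point equation used in \cite{DM} requires finding the right reparametrization and then verifying the resulting algebraic identity between the two closed forms. By contrast, for $d = 2$ the remaining work is only the algebraic simplification indicated above, which becomes elementary once the stationarity relation has been used to eliminate $f(s^*)$ and $s^*$.
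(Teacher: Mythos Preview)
Your overall strategy---locate the maximizer $s^*$ of $L^-$, collapse $L^-(s^*)$ via the stationarity condition and Lemma~\ref{lofl}, then identify the result with the Dembo--Montanari formula through an effective-field substitution---is exactly the route the paper takes. Your $d=2$ computation is correct and in fact more explicit than the paper's own treatment, which simply cites \cite{GGHPa,GGHPb} for that case.

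One assertion is wrong: $L^-$ is \emph{not} concave on $(0,1/2)$ when $\beta>\beta_c$. Indeed, the paper's Claim~2a (Section~4) gives $H''(1/2)=-4-2d(e^{-2\beta}-1)>0$ in that regime, so $(L^-)''(1/2)>0$ as well. Uniqueness of the critical point still holds, but for the subtler reason established in Claim~1$^*$: the equation $L''(t)=0$ reduces to a quadratic in $t$ whose two roots sum to $1$, hence at most one lies in $(1/2,1)$, and this forces $L'$ to have a unique zero there.

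For general $d$, the gap you acknowledge is real and is not just bookkeeping. Under your substitution $1-2s^*=\tanh h^*$ the critical equation becomes $f(s^*)=e^{2(B-h^*)/d}$, but to reach the Bethe equation one must also evaluate $f(s^*)$ directly in terms of the field variable. The paper does this through a nontrivial algebraic identity (labelled (E) in the appendix): if $v=\tanh\bigl(y+\atanh(\tanh x\,\tanh y)\bigr)$ then $f\bigl((1-v)/2\bigr)=\cosh(x-y)/\cosh(x+y)$. Applying it with $x=\beta$ and writing $h^*=h_*+\atanh(\tanh\beta\,\tanh h_*)$ yields $f(s^*)=(1-u_*)/(1+u_*)$ with $u_*=\tanh\beta\,\tanh h_*$; equating the two expressions for $f(s^*)$ then produces exactly $h_*=B+(d-1)\,\atanh(u_*)$. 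Once $t_*$, $f(1-t_*)$ and $e^{-2\beta}$ are all rewritten in terms of $u_*$, both $\psi$ and $\tilde\psi$ collapse to the same closed form. So your plan is sound, but identity~(E) is the substantive ingredient still to be supplied.
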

The proof of Lemma \ref{lofl} and Proposition \ref{eopp} is put in Appendix.
\section{The annealed magnetization and the strong law of large number}
In this section, we prove the existence of the annealed magnetization and Theorem \ref{tll} following the strategy mentioned in the introduction. 

\vspace{0.2 cm}
\noindent {\it Proof of Theorem \ref{ttd} (ii)}.  We state the following claims which we prove below.
\begin{itemize}
\item[$\bullet$] Claim 1. For any $\beta \geq 0$, the function $\psi(\beta, \cdot)$ is differentiable at every point $B \neq 0$.
\item[$\bullet$] Claim 2. For any $d\geq 3$,
\begin{equation*}
\beta_c= \atanh(1/(d-1))= \frac{1}{2} \log \left( \frac{d}{d-2} \right).
\end{equation*}
Moreover, for any $\beta \in (0, \beta_c)$, the function $\psi(\beta, \cdot)$ is differentiable at $B=0$.
\end{itemize}
Assuming these claims, Theorem \ref{ttd} (ii) follows. Indeed,  using similar arguments as in the proof of  Theorem 1.1 (ii) in \cite{GGHPb}, we can  show  that for all $(\beta,B) \in \kU$, the annealed magnetization $(M_n(\beta,B))$ converges to
\begin{equation} \label{fmbb}
\kM(\beta,B): = \frac{\partial \psi(\beta,B)}{\partial B}.
\end{equation}
This together with the claims 1 and 2  imply Theorem \ref{ttd} (ii). \hfill $\square$

\vspace{0.2 cm}
{\it Proof of Claim 1}.   We consider here the case $B>0$, the other one can be handled similarly. We first define some functions on $[0, 1]$:
\begin{eqnarray} \label{ihl}
I(t) &=& (t-1) \log (1-t) - t \log t, \notag \\
H(t) &=& I(t) + d\int_0^{u(t)} \log f(s) ds, \\
L(t) &=& H(t) + 2Bt. \notag
\end{eqnarray}
 By Theorem \ref{ttd} (i),  
\begin{equation} \label{dpbb}
\psi(\beta,B)= (\beta d)/2 -B + \max_{0 \leq t \leq 1} L(t).
\end{equation}
Observe that $H(t)=H(1-t)$ for all $t\in [0,1]$, and $Bt \leq B(1-t)$ for all $t\leq 1/2$. Hence  $L(t)=H(t) +2Bt$ attains the maximum at a point in $[1/2,1]$. We consider the derivative of $L(t)$ on  $[1/2,1]$:
\begin{equation*} \label{hpb}
L'(t)=H'(t)+2B= \log\left(\frac{1-t}{t}\right) - d  \log f(1-t) +2B.
\end{equation*}
We have $L'(1/2)=2B>0$ and $L'(1^-) = - \infty$, so the maximum point of $L(t)$ is a solution of the equation 
\begin{equation} \label{ybb}
L'(t)=\log\left(\frac{1-t}{t}\right) - d  \log f(1-t) +2B=0.
\end{equation}

\vspace{0.2 cm}
 Claim 1$^*$. The equation \eqref{ybb} has a unique solution $t_*$ in $(1/2,1)$,  and  $L''(t_*) \neq 0$. 
 
 \vspace{0.2 cm}
\noindent Assuming  this claim,  we can deduce from the implicit function theorem that the function $t_*$ is differentiable with respect to  $B$. Thus the function $\psi(\beta,\cdot)$ is also differentiable and Claim 1 follows. Moreover,
\begin{eqnarray} \label{kmbb}
\frac{\partial}{\partial B} \psi(\beta,B) = -1 + H'(t_*) \frac{\partial t_*}{\partial B} + 2t_*+2B \frac{\partial t_*}{\partial B} = -1 + 2t_*.
\end{eqnarray}
Now we prove Claim 1$^*$. Since  $L'(1/2)=2B >0$ and $L'(1^{-})=-\infty$, the function  $L'(t)$ has at least one root in $(1/2,1)$. Suppose that $L'(t)$ has more than one root in $(1/2,1)$. Then  $L''(t)$ has at least two roots in $(1/2,1)$. We consider the following equation in $(1/2,1)$ 
\begin{equation} \label{eqlt}
L''(t)= \frac{-1}{t(1-t)} - \frac{d x'(t)}{x(t)} =0,
\end{equation}
where 
$x(t)=f(1-t)$. Since  $f(t)$ satisfies \eqref{eqx}, we have  
\[x(t)= \frac{c(2t-1)}{t}+ \frac{1-t}{tx(t)},\]
with 
\[c=e^{-2\beta} \in (0,1).\]
After some computation, we get
\begin{equation} \label{xpt}
\frac{x'(t)}{x(t)}= \frac{c x(t)-1}{t \big[c(2t-1)x(t)+2-2t \big]}.
\end{equation}
Using this and \eqref{eqlt}, we obtain
\begin{equation} \label{ypb}
L''(t)= \frac{-d(1-t)(cx(t)-1) -c(2t-1)x(t)+2-2t}{t(1-t)[c(2t-1)x(t)+2-2t]}.
\end{equation}
Hence 
\begin{eqnarray*}
L''(t)=0 & \Leftrightarrow & d(1-t)(1-cx(t))=c(2t-1)x(t)+2-2t \\
& \Leftrightarrow & x(t) = \frac{(d-2)(1-t)}{c(d(1-t)+2t-1)} \\
& \Leftrightarrow & \frac{c(2t-1)+ \sqrt{1+(c^2-1)(2t-1)^2}}{2t} = \frac{(d-2)(1-t)}{c(d(1-t)+2t-1)} \\
& \Leftrightarrow &  \sqrt{1+(c^2-1)(2t-1)^2} = \frac{2t(d-2)(1-t)}{c(d(1-t)+2t-1)} -c(2t-1),
\end{eqnarray*}
from which it follows that  
\begin{eqnarray*}
  & &1+(c^2-1)(2t-1)^2 =\frac{4t^2(d-2)^2(1-t)^2}{c^2(d(1-t)+2t-1)^2} +c^2(2t-1)^2 - \frac{4t(d-2)(1-t)(2t-1)}{(d(1-t)+2t-1)} \notag \\
  &\Leftrightarrow & \quad  4t-4t^2 = \frac{4t^2(d-2)^2(1-t)^2}{c^2(d(1-t)+2t-1)^2} - \frac{4t(d-2)(1-t)(2t-1)}{(d(1-t)+2t-1)}  \notag \\
  &\Leftrightarrow & \quad  c^2(d(1-t)+2t-1)^2 = t(1-t)(d-2)^2 - c^2(d-2)(2t-1)}{(d(1-t)+2t-1),
  \end{eqnarray*}
  or equivalently
  \begin{equation}
 c^2 \big[(d-2)^2(t-t^2)+d-1\big] = t(1-t)(d-2)^2. \label{ybph}
  \end{equation}
Since $d\geq 3$, the equation \eqref{ybph} is equivalent to
\begin{equation} \label{eqt}
      t^2 -t+ \frac{c^2(d-1)}{(d-2)^2(1-c^2)}=0.
\end{equation} 
Observe that the sum of two solutions of \eqref{eqt} is $1$. Hence \eqref{eqt} has  at most one solution in $(1/2,1)$.  Therefore $L''(t)$  has at most  one root in $(1/2,1)$. Hence the equation $L'(t)=0$ has a unique solution in $(1/2,1)$, say $t_*$. Now we show $L''(t_*) \neq 0$ by contradiction. Suppose  that $L''(t_*) = 0$. Then $t_*$ must be a solution of   \eqref{eqt}.  Hence
\begin{eqnarray*} 
\frac{c^2(d-1)}{(d-2)^2(1-c^2)} =t_*-t_*^2 < \frac{1}{4}.
\end{eqnarray*}
Thus 
\begin{equation} \label{ced}
  c < \frac{d-2}{d}.
\end{equation}
Since $L'(1/2)>0$ and $L'(t_*)=0$,  there exists $u \in (1/2, t_*)$, such that $L''(u)<0$. On the other hand, by \eqref{ypb} and \eqref{ced},
 \[L''(1/2)= -4 - 2d(c-1) >0.\]
Since $L''(u) L''(1/2) <0$, the function $L''(t)$ has a root in $(1/2,u)$. Hence $L''(t)$ has at least two roots in $(1/2,1)$, which leads a contradiction. Therefore $L''( t_*) \neq 0$ and Claim 1$^*$ follows. \hfill $\square$
 
 \vspace{0.2 cm}
{\it Proof of Claim 2}.  
 Claim 2 is a direct consequence of the following claims.  
 \begin{itemize}
 \item[$\bullet$] Claim 2a. If $\beta >  \textrm{atanh}(1/(d-1))$ then
\begin{equation*}
\lim \limits_{B \searrow 0} \kM(\beta,B) =-1 + 2t_{+} >0,
\end{equation*}
 where $t_+$ is the unique root in $(1/2,1)$ of the function $H'(t)$. 
 \item[$\bullet$] Claim 2b. If $0< \beta <  \textrm{atanh}(1/(d-1))$ then $H'(t)$ is strictly decreasing on $(0,1)$ and   has a unique root $t_0=1/2$. Moreover, the function $\psi(\beta, \cdot)$ is differentiable at $B=0$  and 
 \[\lim_{B \searrow 0} \kM(\beta,B) =\frac{\partial}{\partial B} \psi(\beta,B) \Big|_{B=0} =0.\]
 \end{itemize}
We first prove Claim 2a. Observe that $H'(1/2)=0$ and $H'(1^-)=-\infty$. Moreover,  by \eqref{ypb} 
 \[H''(1/2)= -4 - 2 d(e^{-2\beta}-1) >0,\]
 since 
 \[\beta > \textrm{atanh}(1/(d-1))=\frac{1}{2} \log \left( \frac{d}{d-2} \right).\]
Therefore  $H'(t)$ has at least one root in $(1/2,1)$. Using the same arguments as in the proof of Claim 1$^*$, $H'(t)$ has at most one root in $(1/2,1)$. Thus it  has a unique  root $t_+$ in $(1/2,1)$. Moreover, $H'(t)>0$ for all $t \in (1/2,t_+)$ and $H'(t)<0$ for all $t\in (t_+,1)$.   

On the other hand, $0=L'(t_*)=H'(t_*)+2B$. Hence $H'(t_*)=-2B<0$ when $B>0$. Therefore $t_* > t_+$ for all $B>0$. In addition, $\lim \limits_{B \searrow 0} t_* = t_+$. Hence by \eqref{kmbb}, we have
\[\lim \limits_{B \searrow 0} \kM(\beta,B)= -1 +2 t_+ >0,\]
 which implies Claim 2a. 
 
 We now prove Claim 2b. Assume that 
\begin{equation} \label{ebed}
0< \beta < \frac{1}{2} \log \left( \frac{d}{d-2} \right).
\end{equation}
We first show that $H''(t) <0$ for all $t \in (0,1)$. We consider here the case $t\geq 1/2$, the other one is similar.    Using  \eqref{ypb} and  the same calculation as for \eqref{ybph}, we have 
\begin{eqnarray*}
& & H''(t)=L''(t)<0 \, \Leftrightarrow \, d(1-t)(1-cx(t))<e^{-2 \beta }(2t-1)x(t)+2-2t \\
& \Leftrightarrow &  \sqrt{1+(e^{-4 \beta }-1)(2t-1)^2} > \frac{2t(d-2)(1-t)}{e^{-2 \beta }(d(1-t)+2t-1)} -e^{-2 \beta }(2t-1),
\end{eqnarray*}
from which it follows that  
\begin{equation} \label{ibc}
 e^{-4 \beta }\big[(d-2)^2(t-t^2)+d-1 \big] > t(1-t)(d-2)^2.
\end{equation}
Under the condition \eqref{ebed},  the  inequality \eqref{ibc} is a consequence of the following
 \begin{eqnarray*}
      (d-2)^2(t-t^2)+d-1 &>& t(1-t)d^2 \\
 \Leftrightarrow \quad     1&>& 4t(1-t),  
\end{eqnarray*}
which holds for all $t\in(0,1) \setminus \{1/2\}$.  For $t=1/2$,
 \[H''(1/2)= -4 - 2 d(e^{-2\beta}-1) <0,\]
by \eqref{ebed}. In conclusion, $H''(t) <0$ for all $t \in (0,1)$ and thus  $H'(t)$ is strictly decreasing and has a unique zero at $t=1/2$. Now applying the implicit function theorem for the function $L'(t)$, we get that  $t_*$, the solution of the equation $L'(t)=0$, is  differentiable with respect to $B$ at  $0$. Thus  the function $\psi(\beta, \cdot)$ is also differentiable at $B=0$ and  
\begin{eqnarray}
\lim \limits_{B \searrow 0} \kM(\beta,B) = \frac{\partial}{\partial B} \psi(\beta,B) \Big|_{B=0} = \lim \limits_{B \rightarrow 0} -1 + 2t_* = -1+2t_0=0.
\end{eqnarray}
This implies the claim 2b. 
 \hfill $\square$

\vspace{0.2 cm}
{\it Proof of Theorem \ref{tll}}. As mentioned in the introduction,  the  exponentially strong law large numbers for the magnetization follows from the differentiability of the pressure $\psi(\beta, B)$ with respect to $B$, by using the same arguments  in the proof of  \cite[Theorem 1.2]{GGHPb}. \hfill $\square$

\section{The annealed susceptibility and the central limit theorem}
We have shown  that for all $(\beta,B) \in \kU$,
\[\frac{\partial}{\partial B} \psi(\beta,B) = -1 +2 t_*,\]
where  $t_*$ is the solution of the equation 
\[L'(t)=H'(t)+2B =0,\]
with $L(t)$ and $H(t)$ as in \eqref{ihl}. Moreover, we showed that $t_*$ is a differentiable function with respect to $B$. Hence
\[H''(t_{*}) \frac{\partial t_{*}}{\partial B}  +2 =0,\]
and thus 
\[ \frac{\partial t_{*}}{\partial B}   = \frac{-2}{H''(t_{*})}.\]
Therefore 
\begin{equation} \label{cbd}
\chi(\beta,B):=\frac{\partial^2}{\partial B^2} \psi(\beta,B) = 2 \frac{\partial  t_*}{\partial B} = \frac{-4}{H''(t_{*})}.
\end{equation}
Let us recall the  definition of  the sequence of  cumulant generating functions  
\[c_n(t)=\psi_n(\beta,B+t)-\psi_n(\beta,B).\]
\begin{lem} \label{lcgf}
Suppose that $(\beta,B) \in \kU$. Then for any positive constant $t$ and any sequence $(t_n)$ satisfying $t_n \leq t/ \sqrt{n}$, we have
\begin{equation*} \label{cnht}
c_n''(t_n) \rightarrow \chi(\beta,B).
\end{equation*}
\end{lem}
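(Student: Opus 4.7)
The plan is to recognise $c_n''(t_n)$ as a (rescaled) variance and then to compute its limit via a Laplace approximation of the sum in \eqref{ezn} at the shifted external field $s = B + t_n$. Setting $a_j = \binom{n}{j} g(\beta, dj, dn)$, equation \eqref{ezn} gives
$$\psi_n(\beta, s) = \frac{\beta d}{2} - s + \frac{1}{n} \log \Bigl( \sum_{j=0}^n a_j e^{2sj} \Bigr),$$
and differentiating twice in $s$ kills the linear term and yields
$$c_n''(t_n) = \frac{\partial^2 \psi_n}{\partial B^2}(\beta, s) = \frac{4}{n}\, \mathrm{Var}_{\nu_n}[J],$$
where $\nu_n$ is the probability measure on $\{0,1,\dots,n\}$ with $\nu_n(j) \propto a_j e^{2sj}$ and $J$ is the identity random variable. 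In view of \eqref{cbd}, the lemma reduces to showing $\mathrm{Var}_{\nu_n}[J]/n \to -1/H''(t_*(B))$ as $s = B + t_n \to B$.

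By Stirling's formula for $\binom{n}{j}$ and Lemma \ref{lgb}(i), the log-weights decompose as
$$\log\bigl(a_j e^{2sj}\bigr) = n\, L_s(j/n) + \varepsilon_n(j,s), \qquad L_s(t) = I(t) + dF(t) + 2st,$$
where $\varepsilon_n(\cdot,s) = O(\log n)$ and, crucially, $|\varepsilon_n(j,s) - \varepsilon_n(j',s)| \leq C|j-j'|/n$ uniformly in $j,j',n$ and in $s$ in a neighbourhood of $B$ (in the range where $j/n, j'/n$ stay bounded away from $0$ and $1$): for the $g$-part this is exactly Lemma \ref{lgb}(i), and for the Stirling correction it follows from a direct computation. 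By Claim~1$^*$, or Claim~2b in the $B = 0$ sub-case, $L_s$ has a unique maximiser $t_*(s) \in (0,1)$ with $L_s''(t_*(s)) = H''(t_*(s)) < 0$, and the implicit function theorem makes $s \mapsto t_*(s)$ continuous near $B$; hence $H''(t_*(B+t_n)) \to H''(t_*(B))$.

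I would then carry out the Laplace approximation in three regions around $j_* = \lfloor n t_*(s) \rfloor$. On the ``tail'' region $|j/n - t_*(s)| > \eta$ for a small fixed $\eta > 0$, strict concavity of $L_s$ at its maximiser gives $L_s(j/n) - L_s(t_*(s)) < -\delta$, so the contributions to both the total mass and the second moment are exponentially small in $n$. On the ``intermediate'' annulus $a_n \leq |j - n t_*(s)| \leq \eta n$, with $a_n \to \infty$ and $a_n = o(n^{2/3})$, a uniform quadratic bound $L_s(t) - L_s(t_*(s)) \leq -\kappa(t - t_*(s))^2$ for $|t - t_*(s)| \leq \eta$ yields a Gaussian tail whose contribution to $\mathrm{Var}_{\nu_n}[J]/n$ is negligible. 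On the ``bulk'' region $|j - n t_*(s)| \leq a_n$, the Taylor expansion
$$L_s(t) = L_s(t_*(s)) + \tfrac{1}{2} H''(t_*(s))(t - t_*(s))^2 + O\bigl((t - t_*(s))^3\bigr)$$
combined with $|\varepsilon_n(j,s) - \varepsilon_n(j_*,s)| = O(a_n/n) = o(1)$ shows that the restriction of $\nu_n$ to this region is, up to a $1 + o(1)$ multiplicative normalisation, a discrete Gaussian with mean $n t_*(s)$ and variance $-n / H''(t_*(s))$. Summing the three contributions produces $\mathrm{Var}_{\nu_n}[J]/n \to -1/H''(t_*(B))$, and hence $c_n''(t_n) \to -4 / H''(t_*(B)) = \chi(\beta, B)$.

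The main obstacle is making the Gaussian approximation quantitative and uniform in the small perturbation $t_n$ (equivalently, in $s$ near $B$). The enabling ingredient is precisely the Lipschitz part of Lemma \ref{lgb}(i): without it only the leading exponential asymptotics of the partition function would be accessible (as for $\psi$ itself in the proof of Theorem \ref{ttd}(i)), whereas for the variance one needs the weights to match a Gaussian profile within a $1 + o(1)$ multiplicative factor across the whole bulk window of width $a_n \gg \sqrt{n}$, which forces $\varepsilon_n(j, s) - \varepsilon_n(j_*, s)$ to be $o(1)$ uniformly there.
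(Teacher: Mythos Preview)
Your proposal is correct and follows the same Laplace-approximation strategy as the paper, with the same key input (Lemma \ref{lgb}(i)) controlling the sub-leading error in the log-weights so that the bulk weights match a Gaussian profile up to a $1+o(1)$ factor. The only minor difference is that the paper centers the expansion at the fixed point $j_*=[nt_*(B)]$ and carries the perturbation $t_n$ as a linear shift $2t_n(j-j_*)$ in the Gaussian exponent (exploiting that the Gaussian variance is shift-invariant), whereas you center at the moving point $nt_*(B+t_n)$ and then invoke continuity of $s\mapsto H''(t_*(s))$; both choices lead to the same computation.
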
 

\vspace{0.2 cm}
\noindent {\it Proof of Theorem \ref{ttd} (iii)}.  The result is a consequence of Lemma \ref{lcgf} with $t_n \equiv 0$. \hfill $\square$

\vspace{0.2 cm}

\noindent{\it Proof of Theorem \ref{tclt}}. As mentioned in the introduction,  the central limit theorem is  a consequence of Lemma \ref{lcgf} by applying the same arguments  as in the proof of \cite[Theorem 1.6]{GGHPb} and \cite[Theorem A.8.7]{E}.  \hfill $\square$

\vspace{0.2 cm}
\noindent {\it Proof of Lemma \ref{lcgf}}. We consider here the case $B\geq 0$, the other one can be handled similarly. Thanks to \eqref{cbd}, we only need to show that  for any positive constant $t$ and any sequence $(t_n)$ satisfying $t_n \in [0,t/\sqrt{n}]$,
\begin{equation} \label{cnts}
c_n''(t_n) \rightarrow \frac{-4}{H''(t_{*})}.
\end{equation}
It follows from \eqref{ezn} that for all $s>0$,
\begin{equation} \label{cppt}
c_n''(s)= \frac{\partial^2}{\partial B^2} \log Z_n(\beta,B+s) =\frac{4}{n} \left( \frac{T_{2,n}(s)}{T_n(s)} - \left( \frac{T_{1,n}(s)}{T_n(s)} \right)^2\right),
\end{equation}
where
\begin{eqnarray*}
T_n(s)&=& \sum_{j=0}^n \binom{n}{j} e^{2(B+s)j} g(\beta,dj,dn) \\
T_{1,n}(s)&=& \sum_{j=0}^n \binom{n}{j} e^{2(B+s)j} g(\beta,dj,dn)j \\
T_{2,n}(s)&=& \sum_{j=0}^n \binom{n}{j} e^{2(B+s)j} g(\beta,dj,dn)j^2.
\end{eqnarray*} 
Let us define
\[j_{*}=[nt_*].\]
We will show that the values of $T_{n}(s), T_{1,n}(s), T_{2,n}(s)$ are concentrated around the $j_{*}$ th term of each sum if $s=\kO(1/\sqrt{n})$.  We fix a positive constant $t$ and a sequence $(t_n)$ satisfying $t_n \in [0,t/ \sqrt{n}]$.   Define 
\begin{eqnarray*}
\bar{T}_n(t_n)&=& \sum_{|j-j_*|\geq n^{5/6}} x_j  \hspace{1.2 cm} \textrm{and} \hspace{1.2 cm}  \hat{T}_n(t_n) = \sum_{|j-j_*|< n^{5/6}} x_j  \\ 
\bar{T}_{1,n}(t_n)&=& \sum_{|j-j_*|\geq n^{5/6}} jx_j \hspace{1.06 cm} \textrm{and} \hspace{1.06 cm} \hat{T}_{1,n}(t_n) =  \sum_{|j-j_*|< n^{5/6}} jx_j \\
\bar{T}_{2,n}(t_n)&=& \sum_{|j-j_*|\geq n^{5/6}} j^2 x_j \hspace{1cm} \textrm{and} \hspace{1cm} \hat{T}_{2,n}(t_n)= \sum_{|j-j_*|< n^{5/6}} j^2 x_j, 
\end{eqnarray*}
where 
\begin{eqnarray*}
x_{j}&= &\binom{n}{j} e^{2(B+t_n)j} g(\beta,dj,dn).
\end{eqnarray*}
To prove  \eqref{cppt}, it suffices to  show  that 
\begin{equation} \label{tvtm}
\vline \, \left[ \frac{T_{1,n}(t_n)}{T_n(t_n)} - \left(\frac{T_{2,n}(t_n)}{T_n(t_n)} \right)^2 \right] - \left[\frac{\hat{T}_{1,n}(t_n)}{\hat{T}_n(t_n)} - \left(\frac{\hat{T}_{2,n}(t_n)}{\hat{T}_n(t_n)} \right)^2  \right] \, \vline \leq \frac{4}{n^2},
\end{equation}
and
\begin{equation} \label{thtm}
\frac{4}{n} \left(\frac{\hat{T}_{2,n}(t_n)}{\hat{T}_n(t_n)} - \left( \frac{\hat{T}_{1,n}(t_n)}{\hat{T}_n(t_n)}\right)^2 \right) \longrightarrow \frac{-4}{H''(t_{*})} \hspace{2 cm} \textrm{as } \, n \rightarrow \infty.
\end{equation}
Before proving \eqref{tvtm} and \eqref{thtm}, we make a comparison between $x_{j_*}$ and the other terms. Using  Stirling's formula, we have 
\begin{equation*} \label{bnj}
\binom{n}{j}= \left(\frac{1}{\sqrt{2 \pi}}+\kO(n^{-1})\right)\sqrt{\frac{n}{j(n-j)}} \exp \left(n I \left( \frac{j}{n} \right) \right),
\end{equation*}
where the function $I(t)$ is defined in \eqref{ihl}.  Thus   
  \begin{eqnarray} \label{tjj}
  \frac{x_j}{x_{j_*}}&=& (1+o(1)) \sqrt{\frac{j_*(n-j_*)}{j(n-j)}} \exp \Big(  2(B+t_n)(j-j_*)+ n I(j/n)  - n I(j_*/n) \notag \\
      &&  \hspace{8.0 cm} + \log g(\beta, dj,dn) - \log g(\beta,dj_*,dn) \Big) \notag\\
&=& (1+o(1)) \sqrt{\frac{j_*(n-j_*)}{j(n-j)}} \exp \Big(  2t_n(j-j_*) + n\left[ I(j/n)  +dF(j/n) +2Bj/n \right] \notag \\
&&  \hspace{2.4 cm} -n\left[ I(j_*/n) +dF(j_*/n) +2Bj_*/n\right] + \left[\log g(\beta,dj,dn)-ndF(j/n) \right] \notag\\
&&  \hspace{8.4 cm}  - \left[\log g(\beta,dj_*,dn)-ndF(j_*/n) \right] \Big) \notag \\
&=& (1+o(1)) \sqrt{\frac{j_*(n-j_*)}{j(n-j)}} \exp \Big(  2t_n(j-j_*) + n\left[L(j/n)-L(j_*/n)\right] \\
&& \hspace{2.4 cm} + \left[\log g(\beta,dj,dn)-ndF(j/n) \right] - \left[\log g(\beta,dj_*,dn)-ndF(j_*/n) \right] \Big). \notag
  \end{eqnarray}
    We have some observations on the function $L(t)$ and its  derivatives. Since $L(t)$ attains the maximum at a unique point $t_* \in (0,1)$, 
\begin{itemize}
\item[(O1)]  $L'(t_*)=0$ and $L''(t_*)<0$, 
\item[(O2)]   there exists a positive constant $\varepsilon $, such that  for all $\epsilon \leq \varepsilon$,
\[\max_{|t-t_*|\geq \epsilon} L(t) = \max \{L(t_*-\epsilon ),L(t_*+\epsilon)\}.\]
\item[(O3)] For  $\delta = (1- t_*)/2$, the functions $|L'(t)|,|L''(t)|,|L'''(t)|$ are uniformly bounded in $(t_*-\delta, t_*+ \delta)$.
\end{itemize}
\noindent {\bf  I. Proof of \eqref{tvtm}}.
For  $n$ large enough (such that $n^{-1/6} \leq \varepsilon$ as in (O2)), we have for all $|j-j_*|\geq n^{5/6}$,
 \begin{equation} \label{sach}
 L(j/n)-L(j_*/n) \leq \max \{L(j_*/n + n^{-1/6})- L(j_*/n), L(j_*/n - n^{-1/6})- L(j_*/n) \}.
\end{equation}  
Using (O3) and Taylor's theorem, we get
\begin{eqnarray*}
L(j_*/n \pm n^{-1/6})- L(j_*/n) = \pm  n^{-1/6}L'(j_*/n) + n^{-1/3}L''(j_*/n)/2 + \kO(n^{-1/2}).
\end{eqnarray*}
Similarly,
\begin{eqnarray} \label{lpjn}
L'(j_*/n)=L'(t_*) + \kO(|(j_*/n)-t_*|)  =  \kO(1/n),
\end{eqnarray}
since $L'(t_*)=0$ and $|(j_*/n)-t_*| \leq 1/n$. Therefore
\begin{equation} \label{nljj}
n \big(L(j_*/n \pm n^{-1/6})- L(j_*/n)\big ) = n^{2/3}L''(j_*/n)/2 +o(n^{2/3}).
\end{equation}
On the other hand,  since $L''(t_*) <0$ and the sequence $(j_*/n)$ converges to $t_*$,  for all $n$ large enough
\begin{equation*} \label{lpj}
L''(j_*/n) \leq L''(t_*)/2.
\end{equation*}
Combining this with  \eqref{sach}, \eqref{nljj} gives  that for all $|j-j_*|\geq n^{5/6}$,
\begin{equation} \label{nlj}
n \big(L(j/n)-L(j_*/n)\big) \leq n^{2/3} L''(j_*/n)/8. 
\end{equation}
We now turn back to the formula \eqref{tjj}. Observe that  for all $j \leq n$,
  \begin{equation} \label{jj}
   \sqrt{\frac{j_*(n-j_*)}{j(n-j)}}  \leq \sqrt{n}.
  \end{equation}
On the other hand,  by Lemma \ref{lgb} (ii), for all $j \leq n$,
  \begin{equation} \label{gjj}
  |\log g(\beta,dj,dn)-ndF(j/n)| = \kO(1). 
  \end{equation}
Since $t_n \leq t/\sqrt{n}$, we have 
  \begin{equation} \label{jjt}
  t_n(j-j_*) = \kO(\sqrt{n}).
  \end{equation}
It follows from  \eqref{tjj}, \eqref{nlj},  \eqref{gjj}, \eqref{jjt} that  for $n$ large enough and  $|j-j_*|\geq n^{5/6}$, 
\begin{equation} \label{xjsj}
x_j \leq x_{j_*} \exp \big(n^{2/3}L''(t_*)/8+ \kO(\sqrt{n})\big)  \leq x_{j_*} n^{-7},
\end{equation} 
 since $L''(t_*)<0$. Therefore
\begin{equation*} \label{ngt}
\bar{T}_n(t_n), \bar{T}_{1,n}(t_n), \bar{T}_{2,n}(t_n) \leq \frac{x_{j_*}}{n^4} \leq \frac{\hat{T}_n(t_n)}{n^4}. 
\end{equation*}
On the other hand,
\begin{equation*}
\hat{T}_{1,n}(t_n), \hat{T}_{2,n}(t_n) \leq n^2 \hat{T}_n(t_n).
\end{equation*}
Hence
\begin{eqnarray*} \label{tbtm}
\vline \frac{T_{1,n}(t_n)}{T_n(t_n)} - \frac{\hat{T}_{1,n}(t_n)}{\hat{T}_n(t_n)} \vline &=& \vline \frac{\bar{T}_{1,n}(t_n)+\hat{T}_{1,n}(t_n)}{\bar{T}_{n}(t_n)+\hat{T}_{n}(t_n)} - \frac{\hat{T}_{1,n}(t_n)}{\hat{T}_n(t_n)} \vline \notag \\
& \leq & \frac{\hat{T}_n(t_n) \bar{T}_{1,n}(t_n)+\hat{T}_{1,n}(t_n) \bar{T}_{n}(t_n)}{\hat{T}_n(t_n)^2} \leq \frac{1+n^2}{n^4}.
\end{eqnarray*}
Similarly, we also have 
\begin{eqnarray*} \label{tbtm}
\vline \frac{T_{2,n}(t_n)}{T_n(t_n)} - \frac{\hat{T}_{2,n}(t_n)}{\hat{T}_n(t_n)} \vline \, \, \leq \, \, \frac{1+n^2}{n^4}.
\end{eqnarray*}
Combining the last two inequalities, we get \eqref{tvtm}.

\vspace{0.2 cm}
\noindent {\bf  II. Proof of \eqref{thtm}}. 

\noindent {\bf  IIa. Estimate of the quotient $x_j/x_{j_*}$}.  We first observe that when $|j-j_*| < n^{5/6}$,
\begin{equation} \label{mjj}
\sqrt{ \frac{j_*(n-j_*)}{j(n-j)}}  = 1+ \kO(|j-j_*|/n)=1+\kO(n^{-1/6}).
  \end{equation}
It follows from   Lemma \ref{lgb} (i) that for all $j$,
  \begin{equation} \label{mgjj}
  \Big| \big[\log g(\beta,dj,dn)-ndF(j/n)\big]  - \big[\log g(\beta,dj_*,dn)-ndF(j_*/n)\big]\Big| =\kO(|j-j_*|/n).
  \end{equation}
As for \eqref{nljj}, by using \eqref{lpjn} we have for all $|j-j_*| < n^{5/6}$,
  \begin{eqnarray*}
  n(L(j/n)-L(j_*/n)) &=& n \left(L'\left(\frac{j_*}{n}\right) \left(\frac{j-j_*}{n}\right) + L''\left(\frac{j_*}{n}\right) \frac{(j-j_*)^2}{2n^2} + \kO\left( \left(\frac{j-j_*}{n}  \right)^3\right) \right) \\
 & = & L''\left(\frac{j_*}{n} \right) \frac{(j-j_*)^2}{2n} + \kO\left(\frac{j-j_*}{n}  \right) + \kO\left( \frac{(j-j_*)^3}{n^2}  \right)\\
  & = & \left[ L''\left(\frac{j_*}{n} \right) + \kO(n^{-1/6})\right] \frac{(j-j_*)^2}{2n} + \kO(n^{-1/6}),
  \end{eqnarray*}
where in the  last line,  we used that 
  $$\kO\left( \frac{(j-j_*)^3}{n^2}  \right) =  \frac{(j-j_*)^2}{2n} \kO\left( \frac{(j-j_*)}{n}  \right)= \frac{(j-j_*)^2}{2n}\kO\left( n^{-1/6} \right).$$
  On the other hand,
  \begin{equation*}
  L''(j_*/n) =H''(j_*/n) =H''(t_*) + \kO(1/n).
  \end{equation*}
  Therefore
    \begin{equation} \label{mljj}
  n \big(L(j/n)-L(j_*/n) \big)= \left[\frac{H''(t_*)}{2}+ \kO(n^{-1/6}) \right] \frac{(j-j_*)^2}{n} + \kO(n^{-1/6}). 
  \end{equation} 
  Let us define 
  \[\alpha_*:=H''(t_*)/2 = L''(t_*)/2 <0.\]  
  Using \eqref{tjj}, \eqref{mjj}, \eqref{mgjj} and \eqref{mljj}, we get that for any $\varepsilon \in (0, |\alpha_*|/8)$, for all $n$ large enough and  $|j-j_*| < n^{5/6}$,
  \begin{equation} \label{uxj}
 \frac{x_j}{ x_{j_*}} \leq (1+\varepsilon)\exp\left( \left(\alpha_* + \varepsilon\right)\frac{(j-j_*)^2}{n} + 2t_n(j-j_*)\right) ,
  \end{equation}
  and 
   \begin{equation} \label{lxj}
  \frac{x_j}{ x_{j_*}} \geq (1-\varepsilon)\exp\left( \left(\alpha_* - \varepsilon\right)\frac{(j-j_*)^2}{n} + 2t_n(j-j_*)\right).
  \end{equation}
{\bf  IIb. Estimate of $\hat{T}_n(t_n)$}.   Observe that
\begin{eqnarray} \label{tnbs}
\hat{T}_n(t_n) = \sum_{|j-j_*| < n^{5/6}} x_j = x_{j_*} \sum_{|j-j_*| < n^{5/6}} \frac{x_j}{x_{j_*}}.
\end{eqnarray}  
Moreover, for all $\alpha < 0$,
\begin{eqnarray}
\sum_{|j-j_*| < n^{5/6}}  \exp\left(  \frac{\alpha(j-j_*)^2}{n} + 2t_n(j-j_*)\right) &= & \sum_{|j| < n^{5/6}} \exp\left( \alpha \frac{j^2}{n} + 2(t_n \sqrt{n}) \frac{j}{\sqrt{n}}\right) \notag \\
&= & \sum_{j =- \infty }^{\infty } \exp\left( \alpha \frac{j^2}{n} + 2(t_n \sqrt{n}) \frac{j}{\sqrt{n}}\right) +o(1) \notag \\
&=& \sqrt{n} \int_{-\infty}^{\infty} e^{\alpha x^2 +2(t_n\sqrt{n})x} dx +o(1). \label{sctn}
\end{eqnarray}
  Here we used the integral approximation and the fact that $t_n \sqrt{n}$ is uniformly bounded.
Combining \eqref{uxj}, \eqref{lxj},  \eqref{sctn} yields that 
\begin{eqnarray} \label{tjjs}
(1-2\varepsilon) \sqrt{n}  A(\alpha_*-\varepsilon, t_n \sqrt{n}) \leq \sum_{|j-j_*| < n^{5/6}} \frac{x_j}{x_{j_*}}  \leq (1+2\varepsilon) \sqrt{n}  A(\alpha_*+\varepsilon, t_n \sqrt{n}),
\end{eqnarray}
where for  $\alpha <0$ and $\gamma \in \mathbb{R}$,
  \begin{equation*}
  A(\alpha, \gamma):=\int_{-\infty}^{\infty} e^{\alpha x^2+ \gamma x} dx.
  \end{equation*}
Using \eqref{tnbs} and \eqref{tjjs}, we have
  \begin{eqnarray} \label{tktn}
(1-2\varepsilon) \sqrt{n} x_{j_*}  A(\alpha_*-\varepsilon, t_n \sqrt{n}) \leq \hat{T}_n(t_n) \leq (1+2\varepsilon) \sqrt{n} x_{j_*} A(\alpha_*+\varepsilon, t_n \sqrt{n}).
\end{eqnarray}
{\bf  IIc. Estimate of $\hat{T}_{1,n}(t_n)$}. 
We have 
  \begin{eqnarray} \label{tmtn}
\hat{T}_{1,n}(t_n)& =& \sum_{|j-j_*| < n^{5/6}} jx_j = x_{j_*}  \sum_{|j-j_*| < n^{5/6}} (j-j_*)\frac{x_j}{x_{j_*}} + j_* x_{j_*} \sum_{|j-j_*| < n^{5/6}} \frac{x_j}{x_{j_*}}. 
\end{eqnarray}  
As for \eqref{sctn}, we have  
  \begin{eqnarray*} \label{jxj}
\hspace{-0.9 cm} \sum_{|j-j_*| < n^{5/6}} (j-j_*)  \exp\left(  \frac{\alpha(j-j_*)^2}{n} + 2t_n(j-j_*)\right) &=& n \int_{-\infty}^{\infty} x e^{\alpha x^2 +2(t_n\sqrt{n})x} dx +o(1).
\end{eqnarray*}  
 Using this  approximation and \eqref{uxj}, \eqref{lxj}, we get  
  \begin{eqnarray} \label{xjjs}
\hspace{-0.9 cm} (1-2\varepsilon) n  A_1(\alpha_*-\varepsilon, t_n \sqrt{n}) \leq \sum_{|j-j_*| < n^{5/6}} (j-j_*)\frac{x_j}{x_{j_*}}  \leq (1+2\varepsilon) n  A_1(\alpha_*+\varepsilon, t_n \sqrt{n}),
\end{eqnarray}
where for  $\alpha <0$ and $\gamma \in \mathbb{R}$,
  \begin{equation*}
  A_1(\alpha, \gamma):=\int_{-\infty}^{\infty} x e^{\alpha x^2 +\gamma x} dx.
  \end{equation*}
Combining \eqref{tjjs}, \eqref{tmtn}, \eqref{xjjs}, we get 
  \begin{eqnarray} \label{nhsa}
(1-2\varepsilon)  x_{j_*} [ \sqrt{n} j_* A(\alpha_*-\varepsilon, t_n \sqrt{n}) + n A_1(\alpha_*-\varepsilon, t_n \sqrt{n})] \leq \hat{T}_{1,n}(t_n) \\
 \vspace{9 cm}\leq (1+2\varepsilon)  x_{j_*} [ \sqrt{n} j_* A(\alpha_*+\varepsilon, t_n \sqrt{n})+ n A_1(\alpha_*+\varepsilon, t_n \sqrt{n})] \notag
\end{eqnarray}
 {\bf IId. Estimate of $\hat{T}_{2,n}(t_n)$}. We observe that
    \begin{eqnarray} \label{thtn}
\hat{T}_{2,n}(t_n) = \sum_{|j-j_*| < n^{5/6}} j^2x_j &=& x_{j_*}  \sum_{|j-j_*| < n^{5/6}} (j-j_*)^2\frac{x_j}{x_{j_*}}+ 2j_* x_{j_*} \sum_{|j-j_*| < n^{5/6}} (j-j_*)\frac{x_j}{x_{j_*}}  \notag \\
&+ &   j_*^2 x_{j_*} \sum_{|j-j_*| < n^{5/6}} \frac{x_j}{x_{j_*}}. 
\end{eqnarray}  
Similar to  \eqref{sctn}, 
\begin{eqnarray*}
\sum_{|j-j_*| < n^{5/6}} (j-j_*)^2  \exp\left(  \frac{\alpha(j-j_*)^2}{n} + 2t_n(j-j_*)\right) &=& n\sqrt{n} \int_{-\infty}^{\infty} x^2 e^{\alpha x^2 +2(t_n\sqrt{n})x} dx +o(1).
\end{eqnarray*}
 Using this equality and \eqref{uxj}, \eqref{lxj}, we obtain 
  \begin{eqnarray*} \label{ah}
\vspace{-0.4 cm} (1-2\varepsilon) n \sqrt{n} A_2(\alpha_*-\varepsilon, t_n \sqrt{n}) \leq \sum_{|j-j_*| < n^{5/6}} (j-j_*)^2\frac{x_j}{x_{j_*}}  \leq (1+2\varepsilon) n\sqrt{n}  A_2(\alpha_*+\varepsilon, t_n \sqrt{n}),
\end{eqnarray*}
where for  $\alpha <0$ and $\gamma \in \mathbb{R}$,
  \begin{equation*}
  A_2(\alpha, \gamma):=\int_{-\infty}^{\infty} x^2 e^{\alpha x^2 +\gamma x} dx.
  \end{equation*}
  Combining this estimate with \eqref{tmtn}, \eqref{xjjs} and \eqref{thtn}, we have 
    \begin{eqnarray} \label{nhtm}
 && \vspace{-0.4 cm} (1-2\varepsilon)   x_{j_*} \big [ j_*^2 \sqrt{n} A (\alpha_* - \varepsilon, 2 t_n \sqrt{n}) + 2j_*n A_1(\alpha_* - \varepsilon, 2 t_n \sqrt{n}) + n\sqrt{n} A_2(\alpha_*- \varepsilon, 2 t_n \sqrt{n}) \big ] \leq \hat{T}_{2,n}(t_n) \notag \\
&& \leq (1+2\varepsilon) x_{j_*} \big [ j_*^2 \sqrt{n} A (\alpha_* + \varepsilon, 2 t_n \sqrt{n}) + 2j_*n A_1(\alpha_* + \varepsilon, 2 t_n \sqrt{n}) + n\sqrt{n} A_2(\alpha_* + \varepsilon, 2 t_n \sqrt{n}) \big ]. 
\end{eqnarray}
{\bf  IIe. Conclusion}.  We observe that the derivatives with respect to $\alpha$ at $\alpha_*$  of the functions $A(\alpha, \gamma), A_1(\alpha, \gamma)$ and $A_2(\alpha, \gamma)$ are bounded. Hence, for any $t>0$, there exists a positive constant $C=C(t)$, such that for all $|\gamma| \leq t$, 
   \begin{eqnarray}
  |A(\alpha_* \pm \varepsilon, \gamma) - A(\alpha_*, \gamma)| & \leq & C A(\alpha_*, \gamma) \varepsilon, \label{ake} \\
    |A_1(\alpha_* \pm \varepsilon, \gamma) - A_1(\alpha_*, \gamma)|& \leq & C A_1(\alpha_*, \gamma) \varepsilon, \label{ame}\\
      |A_2(\alpha_* \pm \varepsilon, \gamma) - A_2(\alpha_*, \gamma)| &\leq& C A_2(\alpha_*, \gamma) \varepsilon. \label{ahe}
   \end{eqnarray}
Using  \eqref{tktn} and  \eqref{ake} we get that for any $\varepsilon \in (0, |\alpha_*|/8)$ and  $n$ large enough 
\begin{eqnarray} \label{btn}
\lambda^-_{\varepsilon} B_n \leq \hat{T}_{n}(t_n)  \leq \lambda^+_{\varepsilon} B_n,
\end{eqnarray}
where 
\[B_n = x_{j_*} \sqrt{n} A (\alpha_*, 2 t_n \sqrt{n}), \]
and 
$$ \lambda^-_{\varepsilon}=(1 - C \varepsilon)(1-2\varepsilon), \hspace{3cm} \lambda^+_{\varepsilon}=(1 + C \varepsilon)(1+2\varepsilon). $$
Similarly, by using \eqref{nhsa} and \eqref{ame} we get 
\begin{eqnarray} \label{btmn}
\lambda^-_{\varepsilon} B_{1,n} \leq \hat{T}_{1,n}(t_n)  \leq \lambda^+_{\varepsilon} B_{1,n},
\end{eqnarray}
where 
\[B_{1,n} = x_{j_*} \big [ j_* \sqrt{n} A (\alpha_*, 2 t_n \sqrt{n}) + n A_1(\alpha_*, 2 t_n \sqrt{n}) \big ],\]
and by using \eqref{nhtm} and \eqref{ahe},
\begin{eqnarray} \label{bthn}
\lambda^-_{\varepsilon} B_{2,n} \leq \hat{T}_{2,n}(t_n)  \leq \lambda^+_{\varepsilon}  B_{2,n},
\end{eqnarray}
where 
\[B_{2,n} = x_{j_*} \big [ j_*^2 \sqrt{n} A (\alpha_*, 2 t_n \sqrt{n}) + 2j_*n A_1(\alpha_*, 2 t_n \sqrt{n}) + n\sqrt{n} A_2(\alpha_*, 2 t_n \sqrt{n}) \big ].\]
Combining \eqref{btn}, \eqref{btmn}, \eqref{bthn}, we have
\begin{eqnarray} \label{hhtm}
\lambda^-_{\varepsilon} \left[ \frac{B_{2,n}}{B_{n}} - \left( \frac{B_{1,n}}{B_n} \right)^2\right] \leq  \frac{\hat{T}_{2,n}(t_n)}{\hat{T}_n(t_n)} - \left( \frac{\hat{T}_{1,n}(t_n)}{\hat{T}_n(t_n)}\right)^2 \leq  \lambda^+_{\varepsilon}  \left[ \frac{B_{2,n}}{B_{n}} - \left( \frac{B_{1,n}}{B_n} \right)^2\right]. 
\end{eqnarray}
Moreover, 
\begin{eqnarray*}
\frac{B_{2,n}}{B_{n}} - \left( \frac{B_{1,n}}{B_n} \right)^2 = n \left[ \frac{A_{2}(\alpha_*, 2 t_n \sqrt{n})}{A(\alpha_*, 2 t_n \sqrt{n})} - \left( \frac{A_{1}(\alpha_*, 2 t_n \sqrt{n})}{A(\alpha_*, 2 t_n \sqrt{n})} \right)^2\right].
\end{eqnarray*}
Note that  $A(\alpha, \gamma), A_1(\alpha, \gamma), A_2(\alpha, \gamma)$ are related to moments of the normal distribution with mean $\gamma/(2\alpha)$ and variance $1/(-2 \alpha)$. By some simple calculus, we have 
\begin{equation*} \label{ahm}
\frac{A_2(\alpha, \gamma)}{A(\alpha, \gamma)} - \left( \frac{A_1(\alpha, \gamma)}{A(\alpha, \gamma)} \right)^2 = \frac{-1}{2 \alpha}.
\end{equation*}
Thus 
\begin{eqnarray} \label{bhmn}
\frac{B_{2,n}}{B_{n}} - \left( \frac{B_{1,n}}{B_n} \right)^2  = \frac{-n}{2 \alpha_*} = \frac{-n}{H''(t_*)}.
\end{eqnarray}
Combining   \eqref{hhtm} and \eqref{bhmn} yields that 
\[   \lambda^-_{\varepsilon}\frac{-4}{H''(t_*)} \leq   \frac{4}{n} \left(\frac{\hat{T}_{2,n}(t_n)}{\hat{T}_n(t_n)} - \left( \frac{\hat{T}_{1,n}(t_n)}{\hat{T}_n(t_n)}\right)^2 \right)   \leq  \lambda^+_{\varepsilon}   \frac{-4}{H''(t_*)}.\]
Letting $\varepsilon \rightarrow 0$ and $n\rightarrow \infty$, we get \eqref{thtm}. \hfill $\square$
\section{Proof of Proposition \ref{pnl} }
In this section, we assume that $\beta > \beta_c$ and $B=0$. Then for all $\sigma \in \Omega_n$, 
\begin{equation} \label{msts}
\mu_n(\sigma) = \mu_n(-\sigma).
\end{equation}
By this symmetry of the measure $\mu_n$, we observe that   Proposition \ref{pnl} follows if there exists a positive constant $\nu$, such that   as $n \rightarrow \infty$
 \begin{equation} \label{dm}
 \mu_n \left( \, \vline \, \frac{S_n}{n} - \nu \,  \vline \, < n^{-1/6}  \right)  \longrightarrow 1/2.
 \end{equation}
We now  prove \eqref{dm} using  the same strategy as in Section 5. By \eqref{ehs} and \eqref{ezn}, we have  
\begin{equation} \label{musg}
\mu_n(\sigma) = \frac{g(\beta, d |\sigma_+|, dn)}{\sum \limits_{j=0}^n \binom{n}{j}g(\beta, dj, dn)}.
\end{equation}
We have proved in the Claim 2a in Section 4 that on $(1/2, 1)$, the function $H'(t)$ has a unique zero $t_+$, which is  the maximum point of $H(t)$. Let us define 
$$\nu= 2t_+ - 1.$$ 
Since $S_n = 2 |\sigma_+| -n$,
 \begin{equation*} 
 \mu_n \left( \, \vline \, \frac{S_n}{n} - \nu \,  \vline \, < n^{-1/6}  \right) =  \mu_n \left( \, \vline \, \frac{|\sigma_+|}{n} - t_+ \,  \vline \, < n^{-1/6}  \right). 
 \end{equation*}
 Combining this with \eqref{musg}, we get 
  \begin{equation} \label{nubb} 
 \mu_n \left( \, \vline \, \frac{S_n}{n} - \nu \,  \vline \, < n^{-1/6}  \right) = \frac{\sum \limits_{j=0}^n y_j 1(|(j/n)-t_+| < n^{-1/6})}{\sum \limits_{j=0}^n y_j},
 \end{equation}
 where 
 $$y_j=\binom{n}{j}g(\beta, dj, dn).$$
We note that $y_j = y_{n-j}$, so 
$$\sum \limits_{j=0}^n y_j= 2 \left( \sum \limits_{j=[n/2]+1}^n y_j + z/2 \right) =: 2 R_n,$$
where 
$$
z = \begin{cases}
y_{[n/2]} \quad \textrm{ if $n$ is odd } \\
0 \quad \qquad \textrm{ otherwise}.
\end{cases}
$$
 Let us define
$$\hat{R}_n = \sum \limits_{j=0}^n y_j 1(|(j/n)-t_+| < n^{-1/6}) \hspace{0.9 cm} \textrm{and} \hspace{0.9 cm} \bar{R}_n = R_n - \hat{R}_n.$$ 
 Note that $j_+ \in (1/2, 1)$, so all the indies in the definition of $\hat{R}_n$ are in the sum $R_n$. Now we observe that by \eqref{nubb}, the equation \eqref{dm}  is equivalent to 
 \begin{equation} \label{rhrn}
 \frac{\hat{R}_n}{R_n} \rightarrow 1.
 \end{equation}
Using the same idea of the proof of Lemma \ref{lcgf}, we define
 $$j_+=[nt_+].$$
As for \eqref{tjj}, for all $j$
\begin{eqnarray*}
\frac{x_j}{x_{j_+}}=(1+o(1)) \sqrt{\frac{j_+(n-j_+)}{j(n-j)}} & \exp \Big(   n\left[H(j/n)-H(j_+/n)\right] 
+ \left[\log g(\beta,dj,dn)-ndF(j/n) \right] \\
& \hspace{0.9 cm} - \left[\log g(\beta,dj_+,dn)-ndF(j_+/n) \right] \Big).
\end{eqnarray*} 
 Using the same arguments for \eqref{xjsj}, we can show that for all $j$ satisfying  $j\geq [n/2]$ and $|(j/n)-t_+| \geq n^{-1/6}$, 
 \begin{equation} \label{xjjp}
 x_j \leq \frac{x_{j_+}}{n^7}.
 \end{equation}
 Note that here $H(t)$ and $t_+$ play the same role of $L(t)$ and $t_*$   as in the proof of \eqref{xjsj}. Using \eqref{xjjp}, we get 
 \begin{equation*}
 \bar{R}_n \leq \frac{x_{j_+}}{n^6} \leq \frac{\hat{R}_n}{n^6}.  
 \end{equation*}
 Thus 
 \begin{equation*}
 \frac{\hat{R}_n}{R_n}= \frac{\hat{R}_n}{\bar{R}_n+ \hat{R_n}} \longrightarrow 1,
 \end{equation*}
 and  \eqref{rhrn} follows. \hfill $\square$
\section{The annealed pressure of  Ising model on the configuration model}
Let $G_n$ be the configuration model whose the vertex set is $V_n=\{v_1,\ldots,v_n\}$ and the  degrees of vertices $(D_i)$ are  i.i.d.  integer-valued random variables with the same distribution as $D$. Assume that   
\begin{equation} \label{cdd}
\E\left( e^{s D}\right) < \infty \quad \textrm{for all} \quad s \in \R.
\end{equation}
Notice that the condition \eqref{cdd} is necessary, since  without it the partition function  has infinite expectation  when  $\beta$ is large enough. 

Now  we  study the annealed pressure of the Ising model on $G_n$.   We use the same notation as in the proof of Theorem \ref{ttd} (i). Observe that for all $\sigma \in \Omega_n$, 
\begin{eqnarray*}
\sum_{i=1}^n \sigma_i &= &2|\sigma_+| -n, \\
\sum_{i\leq j} k_{i,j} \sigma_i \sigma_j &=& \ell_n/2 - 2e(\sigma_{+}, \sigma_{-}),
\end{eqnarray*}
where for all $1\leq j \leq n$,
\[\ell_j=D_1+\ldots+D_j.\]
Using \eqref{hebn} and the fact that $(D_i)_{1 \leq i \leq n}$ are i.i.d. random variables, we have if $|\sigma_+| = |\sigma_+'|$,
\[\ell_n/2 - 2e(\sigma_{+}, \sigma_{-}) \mathop{=}^{(\kD)} \ell_n/2 - 2e(\sigma'_{+}, \sigma'_{-}).\]
Hence using the same arguments as for Theorem \ref{ttd} (i), we obtain
\begin{equation} \label{zbb}
\E(Z_n(\beta,B))= e^{-Bn}\sum_{j=0}^n \binom{n}{j} e^{2Bj} b(\beta,j,n),
\end{equation}
where 
\[b(\beta,j,n)= \E \Big(\exp \big[\beta \ell_n/2 -2 \beta e(U_j,U_j^c)\big]\Big),\]
with \[U_j=\{v_1,\ldots,v_j\}.\]
Using  \eqref{hebn} once again,  we have
\[ \kL(e(U_j,U_j^c)\mid (D_i)_{1\leq i \leq n} ) \mathop{=}^{(\kD)} \kL(X(\ell_j,\ell_n) \mid (D_i)_{1\leq i \leq n} ), \]
where  $X(k,m)$ is defined as in \eqref{dfox} for all $k\leq m$. Hence
 \[\E_{(D_i)} \Big(\exp \big[ -2 \beta e(U_j,U_j^c) \big]   \Big)=g(\beta,\ell_j,\ell_n),\]
 where $\E_{(D_i)}$ is the expectation w.r.t. configuration model conditioning on the sequence of degrees $(D_i)_{i\leq n}$, and  $g(\beta,k,m)$ is defined as in \eqref{dfog}.  Thus
\[\E_{(D_i)}\Big(\exp \big[\beta \ell_n/2 -2 \beta e(U_j,U_j^c) \big ]  \Big)=\exp(\beta\ell_n/2)g(\beta,\ell_j,\ell_n).\]
Therefore
\[b(\beta,j,n)=\bar{\E} \left( \E_{(D_i)}\Big(\exp \big[\beta \ell_n/2 -2 \beta e(U_j,U_j^c) \big ] \Big) \right)= \bar{\E} \Big(\exp(\beta\ell_n/2)g(\beta,\ell_j,\ell_n)\Big),\]
where $\bar{\E}$ is the expectation w.r.t. the sequence of degrees $(D_i)_{i\leq n}$.
By Lemma \ref{lgb} (ii), there is a positive constant $C=C(\beta)$, such that for all $j\leq n$
\[\exp \Big(-C+ \ell_nF(\ell_j/\ell_n)\Big) \leq g(\beta, \ell_j, \ell_n) \leq \exp \Big(C+ \ell_nF(\ell_j/\ell_n)\Big),\]
with  $F(t)$ as in Theorem \ref{ttd} (i). Hence
\begin{equation} \label{bbjn}
 \vline \, \frac{\log b(\beta,j,n)}{n} - \frac{\log \bar{\E} \Big(\exp \big[\ell_n (\beta/2+F(\ell_j/\ell_n))\big]\Big)}{n}  \, \vline \leq \frac{C}{n}.
\end{equation}
For each $\beta \geq 0$, we define a sequence of functions on $[0,1]$ as follows:
\[G_n(\beta,t)= \frac{1}{n} \log \bar{\E}\Big(\exp \big[\ell_n (\beta/2+F(\ell_{[nt]}/\ell_n))\big]\Big).\]
To study the limit of the sequence of functions $(G_n(\beta,t))_n$, we need a  large deviation result for the vector $(\ell_{[nt]},\ell_n)$. We use the standard notion of large deviation principle (LDP) as in \cite{DZ}.
 Let $(X_i)$ be a sequence of i.i.d. random variables. Suppose that  for all $s \in \R$
$$\Lambda(s)=\E(e^{s X_1}) < \infty.$$
Let us define for $t\in [0, 1]$,
$$Z_n(t)= \frac{1}{n} \sum \limits_{i=1}^{[nt]} X_i.$$
Let $\nu_n$ be the law of $Z_n(\cdot)$ in $L_{\infty}([0,1])$. 
\begin{lem} \label{ldp} \cite[Lemma 5.1.8]{DZ}
Let $\kQ$ denote the collection of all ordered finite subsets of $(0,1]$. For any $q= \{0< t_1< \ldots<t_{|q|} \leq 1\} \in \kQ$ and $f: [0, 1] \rightarrow \R$, let $p_q(f)$ denote the vetor $(f(t_1), \ldots, f(t_{|q|}))\in \R^{|q|}$. Then the sequence of laws $(\nu_n \circ p_q^{-1})_n$ satisfies the LDP in $\R^{|q|}$ with the rate function 
$$R_q(z)=\sum_{i=1}^{|q|} (t_i - t_{i-1}) \Lambda^* \left( \frac{z_i - z_{i-1}}{t_i - t_{i-1}} \right),$$
where $z=(z_1, \ldots, z_{|q|})$, $z_0=t_0=0$, and 
$$\Lambda^*(x)=\sup_{s \in \mathbb{R}}\{ x s- \Lambda(s)\}.$$  
\end{lem}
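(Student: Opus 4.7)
The plan is to reduce the statement to a multidimensional version of Cram\'er's theorem applied to the vector of increments of $Z_n(\cdot)$, and then to transport the resulting LDP to the partial-sum coordinates by a linear bijection via the contraction principle. This route is natural because the random walk underlying $Z_n$ has independent, stationary increments, so the joint law of $p_q(Z_n(\cdot))$ is the image of $|q|$ independent (appropriately scaled) empirical means under a fixed affine map.

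First I would set $t_0=0$ and introduce the increment vector
\[\Delta^{(n)} = \bigl(\Delta_1^{(n)}, \ldots, \Delta_{|q|}^{(n)}\bigr), \qquad \Delta_i^{(n)} = Z_n(t_i) - Z_n(t_{i-1}) = \frac{1}{n} \sum_{j=[nt_{i-1}]+1}^{[nt_i]} X_j.\]
Because the defining index blocks are pairwise disjoint, the coordinates of $\Delta^{(n)}$ are mutually independent. Writing $m_i(n) = [nt_i]-[nt_{i-1}]$, so that $m_i(n)/n \to t_i - t_{i-1}$, each $\Delta_i^{(n)}$ equals $(m_i(n)/n)$ times an empirical mean of $m_i(n)$ i.i.d.\ copies of $X_1$. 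Since $\Lambda(s)<\infty$ on all of $\R$, $\Lambda^*$ is a good rate function and the classical Cram\'er theorem applies; at speed $n$, it yields for each $i$ an LDP for $\Delta_i^{(n)}$ with rate function $\delta\mapsto (t_i - t_{i-1})\,\Lambda^*\!\bigl(\delta/(t_i-t_{i-1})\bigr)$.

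Next, since independent LDPs at the same speed with good rate functions tensorize to an LDP on the product space with the additive rate function, I obtain an LDP for $\Delta^{(n)}$ in $\R^{|q|}$ at speed $n$ with rate function
\[\widetilde{R}_q(\delta_1,\ldots,\delta_{|q|}) = \sum_{i=1}^{|q|} (t_i-t_{i-1})\,\Lambda^*\!\bigl(\delta_i/(t_i-t_{i-1})\bigr).\]
The linear bijection $\Phi:\R^{|q|}\to\R^{|q|}$ defined by $\Phi(\delta_1,\ldots,\delta_{|q|}) = (\delta_1,\,\delta_1+\delta_2,\,\ldots,\,\delta_1+\cdots+\delta_{|q|})$ sends $\Delta^{(n)}$ to $p_q(Z_n(\cdot))$. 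Applying the contraction principle then transports the LDP and, upon substituting $\delta_i = z_i - z_{i-1}$, yields exactly the advertised rate function $R_q$.

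The only minor nuisance I foresee is the integer-part discrepancy $|m_i(n) - n(t_i - t_{i-1})| \leq 1$, but since $\Lambda^*$ is a good rate function and the Cram\'er estimates come with uniform exponential-Chebyshev controls, this produces only an $o(n)$ correction in every exponent and does not affect the limiting rate function. No deeper obstacle is expected: the real content reduces to the independence of increments, the homogeneity built into the rescaling $\Delta_i^{(n)} = (m_i(n)/n)\,\bar{X}_{m_i(n)}$, and a routine contraction.
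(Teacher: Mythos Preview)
Your argument is correct and is the standard route: reduce to independent block increments, apply Cram\'er's theorem coordinatewise, tensorize, and transport by the linear bijection via the contraction principle. The paper does not supply its own proof of this lemma; it is quoted verbatim as \cite[Lemma 5.1.8]{DZ}, and the proof there follows essentially the same independence-plus-contraction scheme you outline.
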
   
\noindent Using this result, we can  show the convergence of the sequence $(G_n(\beta,t))_n$.
\begin{lem} \label{lgc}
For all $\beta \geq 0$, the following assertions hold.
\begin{itemize} 
\item[(i)] There exists a positive constant $C$, such that for all $0 \leq s, t \leq 1$ and $n \geq 1$,
\[|G_n(\beta,t)-G_n(\beta,s)| \leq C \left(|t-s|+ \frac{1}{n} \right).\]
\item[(ii)] For all $t \in [0,1]$, we have
\[ \lim_{n\rightarrow \infty} G_n(\beta,t) =G(\beta,t),\]
where
\[G(\beta,t)=\sup_{a,b} \Big\{b(\beta/2 +F(a/b)) - t \Lambda^*\left( \frac{a}{t}\right) - (1-t) \Lambda^*\left( \frac{b-a}{1-t}\right) \Big\},\]
with 
\[\Lambda^*(x)=\sup_{s \in \mathbb{R}}\{ x s- \Lambda(s)\},\]
and 
\[\Lambda (s)= \log \E(\exp(s D)).\]
Moreover, $G(\beta,t)$ is   a Lipschitz function.
\end{itemize}
\end{lem}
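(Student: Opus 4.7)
The plan is to prove (i) by a discrete telescoping that exploits the Lipschitz character of $F$ (itself inherited from the bound $c\le f\le 1$ in \eqref{chfm}), and to prove (ii) by recognising $nG_n(\beta,t)$ as the exponential moment of a continuous functional of $(\ell_{[nt]}/n,\ell_n/n)$ and invoking Varadhan's lemma on top of Lemma \ref{ldp}. Lipschitz continuity of the limit $G(\beta,\cdot)$ then drops out of (i) combined with Lemma \ref{lpre} (ii).

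For (i), $G_n(\beta,t)$ depends on $t$ only through $[nt]$, so setting $g_n(k):=nG_n(\beta,k/n)=\log\bar{\E}[e^{\phi_k}]$ with $\phi_k:=\beta\ell_n/2+\ell_n F(\ell_k/\ell_n)$, it suffices to show $|g_n(k+1)-g_n(k)|\le C$ uniformly in $k,n$; telescoping together with $|[nt]-[ns]|\le n|t-s|+1$ then yields (i). Let $M:=\|F\|_\infty$ and $L$ the Lipschitz constant of $F$ on $[0,1]$, both finite by \eqref{chfm}. A direct split
$$y_1F(a/y_1)-y_2F(a/y_2)=y_1\bigl[F(a/y_1)-F(a/y_2)\bigr]+(y_1-y_2)F(a/y_2)$$
combined with $a/y\le 1$ gives the joint Lipschitz bound $|y_1F(a/y_1)-y_2F(a/y_2)|\le(M+L)|y_1-y_2|$. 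Freezing all $D_i$ with $i\ne k+1$ and writing $m=D_{k+1}$, this turns $\phi_k$ into an almost-linear function of $m$:
$$|\phi_k(m)-\phi_k(0)|\le (\beta/2+M+L)\,m.$$
Combined with $|\phi_{k+1}-\phi_k|\le LD_{k+1}$ (direct Lipschitz of $F$), integration against the law of $D_{k+1}$ bounds the conditional ratio
$$\frac{\bar{\E}[e^{\phi_k+LD_{k+1}}\mid D_j,\,j\ne k+1]}{\bar{\E}[e^{\phi_k}\mid D_j,\,j\ne k+1]}\le\frac{\E[e^{(\beta/2+M+2L)D}]}{\E[e^{-(\beta/2+M+L)D}]}=:C_1,$$
finite by \eqref{cdd}. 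Averaging over the remaining variables and using the pointwise estimate $e^{\phi_{k+1}}\le e^{\phi_k+LD_{k+1}}$ gives $\bar{\E}[e^{\phi_{k+1}}]\le C_1\bar{\E}[e^{\phi_k}]$; the reverse bound is symmetric.

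For (ii), write $G_n(\beta,t)=\frac1n\log\bar{\E}[e^{n\Phi(a_n,b_n)}]$ with $a_n:=\ell_{[nt]}/n$, $b_n:=\ell_n/n$, and $\Phi(a,b):=b(\beta/2+F(a/b))$ (set $\Phi(0,0)=0$). Applying Lemma \ref{ldp} to $X_i=D_i$ with $q=\{t,1\}$ gives an LDP for $(a_n,b_n)$ on $\{0\le a\le b\}$ with rate function $I_t(a,b)=t\Lambda^*(a/t)+(1-t)\Lambda^*((b-a)/(1-t))$ (the boundary cases $t\in\{0,1\}$ are handled by the usual degeneration). Since $\Phi$ is continuous and $\Phi(a,b)\le(\beta/2+M)b$, \eqref{cdd} yields for every $\delta>0$ the Varadhan tail condition
$$\limsup_{n\to\infty}\tfrac1n\log\bar{\E}\bigl[e^{(1+\delta)n\Phi(a_n,b_n)}\bigr]\le\log\E\bigl[e^{(1+\delta)(\beta/2+M)D}\bigr]<\infty.$$
Varadhan's lemma then gives $\lim_nG_n(\beta,t)=\sup_{(a,b)}\{\Phi(a,b)-I_t(a,b)\}$, and unwrapping $\Phi$ reproduces the claimed formula for $G(\beta,t)$.

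The main obstacle is (i): the bound must be $n$-uniform, yet the tilt $e^{\phi_k}$ involves the global sum $\ell_n$, which depends on the single variable $D_{k+1}$ one would like to integrate out. The joint Lipschitz bound $|y_1F(a/y_1)-y_2F(a/y_2)|\le(M+L)|y_1-y_2|$ is what rescues the argument: the crude alternative $|\phi_k|\le(\beta/2+M)\ell_n$ only produces conditional ratios of order $e^{cn}$, and the telescoping collapses.
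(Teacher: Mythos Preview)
Your proof is correct and follows essentially the same route as the paper: for (i), your conditioning on $\{D_j:j\ne k+1\}$ and anchoring at $\phi_k(0)$ is exactly the paper's device of replacing $\ell_n$ by $\ell_{n,j}=\ell_n-D_j$ so as to manufacture an exponent independent of the single degree being integrated out, and the resulting one-step bound plus telescoping is identical; for (ii), both proofs apply Lemma~\ref{ldp} with $q=\{t,1\}$ followed by Varadhan's lemma, and then extract the Lipschitz property of $G(\beta,\cdot)$ from (i) via Lemma~\ref{lpre} (ii). Your explicit verification of the Varadhan moment condition via $\Phi(a,b)\le(\beta/2+M)b$ is a detail the paper leaves implicit.
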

\begin{proof}  We first prove (i). Observe that 
\begin{equation*}
0 \leq F(t) \leq 1 \quad \textrm{and} \quad \max_{t \in [0,1]} |F'(t)| = \max_{t \in [0,1/2]} |\log f(t)| \leq 2 \beta, 
\end{equation*} 
since the function $F(t)$ is symmetric about $1/2$ and  $e^{-2 \beta} \leq f(\beta,t) \leq 1$ for all $t \in [0, 1/2]$. Therefore
\begin{equation} \label{fcfp}
\beta/2 + \max_{t \in [0,1]} (|F(t)| + |F'(t)|) \leq r:= 1 + 5 \beta/2.
\end{equation}
We claim that 
\begin{equation} \label{tata}
|\log \bar{\E} \left(e^{\ell_n[\beta/2 + F(\ell_{j}/ \ell_n)]} \right) - \log \bar{\E} \left(e^{\ell_n[\beta/2 + F(\ell_{j-1}/ \ell_n)]} \right)| \leq C,
\end{equation} 
 where
 \begin{equation*}
 C= \max \{ \log \bar{\E} \left( e^{3rD} \right)   -\log \bar{\E} \left( e^{-2rD} \right), \log \bar{\E}\left( e^{2rD} \right) - \log \bar{\E}\left( e^{-3rD} \right) \}. 
 \end{equation*}
Assuming \eqref{tata}, we can easily prove (i). Indeed, by repeatedly applying \eqref{tata}, we have for all $i \leq j \leq n$,
 \begin{equation}
\Big|\log \bar{\E} \left(e^{\ell_n[\beta/2 + F(\ell_{i}/ \ell_n)]} \right) - \log \bar{\E} \left(e^{\ell_n[\beta/2 + F(\ell_{j}/ \ell_n)]} \right) \Big| \leq C|i-j|.
\end{equation} 
 Thus 
 \begin{eqnarray*}
 |G_n(\beta,t)-G_n(\beta,s)| &=& \frac{1}{n} \Big|\log \bar{\E} \left(e^{\ell_n[\beta/2 + F(\ell_{[nt]}/ \ell_n)]} \right) - \log \bar{\E} \left(e^{\ell_n [\beta/2 + F(\ell_{[ns]}/ \ell_n)]} \right) \Big| \\
 &\leq & C \left(|t-s|+ \frac{1}{n} \right),
 \end{eqnarray*}
 which implies (i).

\noindent{\it Proof of \eqref{tata}}. The idea is simple: using  the mean value theorem and \eqref{fcfp}, we have for all $1 \leq j \leq n$,
 \begin{equation} \label{fent}
 \ell_n |F(\ell_{j}/ \ell_n) - F(\ell_{j-1}/ \ell_n)| \leq  \max_{t \in [0,1]} |F'(t)| D_j\leq r D_j.
 \end{equation}
 Hence \eqref{tata} would immediately follow if $\ell_n$ and $D_i$ are independent. Since this fact is not true, we break $\ell_n$ into two independent parts $D_i$ and $\ell_{n,i}$, with 
  \[\ell_{n,j}= \ell_n - D_j.\]
We have
 \begin{eqnarray*}
& &\ell_n(\beta/2 + F(\ell_{j-1}/ \ell_n)) =  \ell_{n,j} (\beta/2 + F(\ell_{j-1}/ \ell_n)) + D_j ( \beta/2 +F(\ell_{j-1}/ \ell_n)) \\
&=&  \ell_{n,j} (\beta/2 + F(\ell_{j-1}/ \ell_{n,j})) + \ell_{n,j} (F(\ell_{j-1}/ \ell_{n}) - F(\ell_{j-1}/ \ell_{n,j})) + D_j( \beta/2 + F(\ell_{j-1}/ \ell_n)).
 \end{eqnarray*}
 Using the mean value theorem and \eqref{fcfp}, we get 
 \begin{equation*}
\Big |\ell_{n,j} \big(F(\ell_{j-1}/ \ell_{n}) - F(\ell_{j-1}/ \ell_{n,j}) \big) + D_j \big(\beta/2 + F(\ell_{j-1}/ \ell_n)\big) \Big| \leq 2 r D_j.
 \end{equation*}
 Therefore
 \begin{equation} \label{ljtm}
 \Big|\ell_n \big(\beta/2 + F(\ell_{j-1}/ \ell_n) \big)- \ell_{n,j} \big(\beta/2 + F(\ell_{j-1}/ \ell_{n,j})\big) \Big| \leq 2rD_j.
 \end{equation}
It follows from \eqref{fent} and \eqref{ljtm} that
 \begin{equation} \label{bkdj}
 \Big|\ell_n \big(\beta/2 + F(\ell_{j}/ \ell_n) \big)- \ell_{n,j} \big(\beta/2 + F(\ell_{j-1}/ \ell_{n,j})\big) \Big| \leq 3rD_j.
 \end{equation}
On the other hand,
\begin{equation} \label{ljdl}
\ell_{n,j} \big(\beta/2 + F(\ell_{j-1}/ \ell_{n,j})\big) \qquad \textrm{is independent of} \qquad D_j.
\end{equation} 
 Using \eqref{bkdj} and \eqref{ljdl}, we obtain
\begin{equation*}
  \bar{\E} \left( e^{-3r D_j}\right) \leq \frac{ \bar{\E}  \big(e^{\ell_n [\beta/2 + F(\ell_{j}/ \ell_n)]} \big)}{\bar{\E} \big(e^{\ell_{n,j} [\beta/2 + F(\ell_{j-1}/ \ell_{n,j})]} \big )} \leq \bar{\E} \left( e^{3r D_j}\right).
\end{equation*} 
 Similarly, using \eqref{ljtm} and \eqref{ljdl}, we have  
 \begin{equation*}
  \bar{\E} \left( e^{-2r D_j}\right) \leq \frac{ \bar{\E} \left(e^{\ell_n[\beta/2 + F(\ell_{j-1}/ \ell_n)]} \right)}{\bar{\E} \left(e^{\ell_{n,j} [\beta/2 + F(\ell_{j-1}/ \ell_{n,j})]} \right)} \leq \bar{\E} \left( e^{2r D_j}\right).
\end{equation*} 
Combining the last two inequalities gives \eqref{tata}. 

 We now prove (ii).  Applying Lemma \ref{ldp} for $q=\{t_1=t < t_2= 1\}$, we get that the law of $ \frac{1}{n}(\ell_{[nt]}, \ell_n)$ satisfies the LDP  in $\R^2$ with the rate function
\[I(a,b)=t \Lambda^*\left( \frac{a}{t}\right) + (1-t) \Lambda^*\left( \frac{b-a}{1-t}\right),\]
where  $\Lambda^*$ is defined as in the statement of (ii). Therefore, using Varadhan's Lemma (see for example \cite[Theorem 4.3.1]{DZ}), the sequence of functions $(G_n(\beta, \cdot))_n$ converges point-wise to the function $G(\beta,\cdot)$ defined as in the statement of (ii). Moreover, applying Lemma \ref{lpre} (ii) and Part i, we obtain that $G(\beta,t)$ is a Lipschitz function.
\end{proof}
\begin{prop} \label{pbbc} For all $\beta \geq 0$ and $B \in \R$, the annealed pressure $\psi_n(\beta,B)$ converges to a limit given by 
\[\psi(\beta,B)= -B + \max_{0\leq t \leq 1} \left[t\log \left( \frac{1}{t}\right)+(1-t)\log \left( \frac{1}{1-t}\right)+2Bt+ G(\beta, t)\right],\]
with $G(\beta,t)$ as in Lemma \ref{lgc}. 
\end{prop}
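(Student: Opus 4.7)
The plan is to mirror the proof of Theorem \ref{ttd} (i) but with the regular-graph input $g(\beta,dj,dn)$ replaced by the annealed quantity $b(\beta,j,n)$, and then invoke the machinery of Lemma \ref{lgc} together with Lemma \ref{lpre} (ii). Starting from the identity \eqref{zbb}, taking logarithms and dividing by $n$ gives
\[
\psi_n(\beta,B) = -B + \frac{1}{n}\log \sum_{j=0}^{n} \binom{n}{j} e^{2Bj}\, b(\beta,j,n).
\]
Since the sum has only $n+1$ terms, replacing it by its largest term costs at most $\log(n+1)/n = o(1)$, so
\[
\psi_n(\beta,B) = -B + \max_{0\leq j \leq n}\frac{1}{n}\left[\log\binom{n}{j} + 2Bj + \log b(\beta,j,n)\right] + o(1).
\]

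Next I would estimate each piece uniformly in $j$. Stirling's formula yields
\[
\frac{1}{n}\log\binom{n}{j} = \frac{j}{n}\log\frac{n}{j} + \frac{n-j}{n}\log\frac{n}{n-j} + o(1),
\]
with the $o(1)$ term uniform in $j$. The bound \eqref{bbjn} then gives
\[
\frac{\log b(\beta,j,n)}{n} = G_n(\beta, j/n) + O(1/n),
\]
again uniformly in $j$, where $G_n(\beta,t)$ is the function introduced just before Lemma \ref{lgc}. Combining these two uniform approximations,
\[
\psi_n(\beta,B) = -B + \max_{0\leq j\leq n}\bigl[H(j/n) + G_n(\beta,j/n)\bigr] + o(1),
\]
with
\[
H(t) := t\log\frac{1}{t} + (1-t)\log\frac{1}{1-t} + 2Bt,
\]
continuously extended by $0$ at the endpoints.

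Finally I would apply Lemma \ref{lpre} (ii): Lemma \ref{lgc} (i) supplies the required Lipschitz-type control $|G_n(\beta,t)-G_n(\beta,s)|\leq C(|t-s|+1/n)$, Lemma \ref{lgc} (ii) supplies pointwise convergence $G_n(\beta,t)\to G(\beta,t)$, and $H$ is continuous on $[0,1]$. Lemma \ref{lpre} (ii) therefore yields
\[
\lim_{n\to\infty}\max_{0\leq j\leq n}\bigl[H(j/n)+G_n(\beta,j/n)\bigr] = \max_{0\leq t\leq 1}\bigl[H(t)+G(\beta,t)\bigr],
\]
which is exactly the claimed formula for $\psi(\beta,B)$.

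The only genuinely delicate point is establishing the uniformity of the error in the passage from $\log b(\beta,j,n)$ to $n\,G_n(\beta,j/n)$; this is the content of Lemma \ref{lgb} (ii) applied to the random sequence $(\ell_j,\ell_n)$ and is already baked into \eqref{bbjn}, so the proof reduces to an essentially mechanical assembly of the ingredients. The main conceptual input is Lemma \ref{lgc} itself, in particular that the large-deviation-induced limit $G(\beta,t)$ is Lipschitz so that Lemma \ref{lpre} (ii) is applicable.
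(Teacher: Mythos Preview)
Your proof is correct and follows essentially the same route as the paper: reduce the sum in \eqref{zbb} to a maximum, approximate the binomial by Stirling and $\log b(\beta,j,n)$ by $nG_n(\beta,j/n)$ via \eqref{bbjn}, and then invoke Lemma \ref{lpre} (ii) with the Lipschitz-type control and pointwise convergence supplied by Lemma \ref{lgc}. The only cosmetic difference is that the paper names your function $H$ as $S$; otherwise the argument is identical.
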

\begin{proof}
Using  \eqref{zbb}, \eqref{bbjn} and Stirling's formula, we get 
\begin{eqnarray} \label{lzbb}
\frac{\log \E(Z_n(\beta,B))}{n} &=& -B+ \max_{0 \leq j \leq n} \left[ \frac{ \log \binom{n}{j}}{n} + 2B \frac{j}{n} + G_n(\beta, j/n) \right] + o(1) \notag \\
&=& -B  + \max_{0 \leq j \leq n} \left[ S(j/n) + G_n(\beta, j/n) \right] + o(1),
\end{eqnarray}
where  $S(t)$ is continuous function on $[0,1]$ defined by
\[S(t)= -t \log t + (t-1) \log (1-t) + 2Bt.\]
Now it follows from \eqref{lzbb},  Lemmas \ref{lgc} (ii)  and \ref{lpre} (ii)  that 
\[\lim_{n \rightarrow \infty} \frac{\log \E(Z_n(\beta,B))}{n} =  -B + \max_{0\leq t \leq 1} \left[S(t)+ G(\beta, t)\right],\]
which proves Proposition \ref{pbbc}.
\end{proof}
\begin{rem} \emph{ We can slightly extend Proposition \ref{pbbc} as follows.  Let  $(X_n)_{n\geq 1}$   and   $X$ be integer valued random variables satisfying 
 \begin{eqnarray}
 \sup_{s \in \R} \E(e^{sX}) < \infty \hspace{1cm} \textrm{ and } \hspace{1cm}   \E(e^{sX_n}) \longrightarrow  \E(e^{sX}) \, \forall \, s \in \R.   
 \end{eqnarray}
For each $n$, let $(X_{n,i})_{i\leq n}$ be a sequence of i.i.d random variables with the same distribution as $X_n$. Let $G_n$ be the configuration model random graph of size $n$ with sequence of degrees given by $(X_{n,i})_{i\leq n}$. Then   Proposition \ref{pbbc} still holds for the annealed Ising model on $G_n$.  A nice example of degree distribution is   $X_n = \textrm{Bin}(n, \gamma/n)$ and $X=\textrm{Poi}(\gamma)$ for some $\gamma >0$. This case is of particular interest due the closeness between the  configuration models and  Galton-Watson trees.} \end{rem}
\section{Appendix}
\subsection{Complement of the proof of Lemma \ref{lgb}} We first recall the formula of $f(t)$
\begin{equation} \label{ftbc}
f(t)= \frac{c(1-2t)+ \sqrt{1+(c^2-1)(1-2t)^2}}{2(1-t)},
\end{equation}
which satisfies the fixed point equation
\begin{equation} \label{theta}
\theta= \frac{c(1-2t) }{1-t}  + \frac{t}{\theta(1-t)},
\end{equation} 
with $c=e^{-2 \beta}$.

\noindent {\it Proof of} \eqref{chfm} {\it and} \eqref{fpc}.  It follows from  \eqref{ftbc}  and \eqref{theta} that for all $0 \leq t \leq 1/2$,
\begin{equation} \label{sbg}
f(t) \leq \frac{(1-2t) +1}{2(1-t)} = 1
\end{equation}
and
\begin{eqnarray*}
f'(t)= \frac{-c}{(1-t)^2} + \frac{1}{f(t)(1-t)^2} - \frac{f'(t)t}{f(t)^2(1-t)}. 
\end{eqnarray*}
Hence
\begin{eqnarray} \label{fplm}
f'(t)\left(1+\frac{t}{f(t)^2(1-t)} \right)= \frac{(1/f(t))-c}{(1-t)^2} >0. 
\end{eqnarray}
Thus $f(t)$ is increasing in $(0,1/2)$. Therefore  
\begin{equation} \label{ef}
c =f(0)\leq f(t) \leq 1,
\end{equation} 
which implies \eqref{chfm}. 
It follows from  \eqref{fplm} and \eqref{ef} that for all $ t \in (0,1/2)$, 
\begin{equation} \label{efp}
\frac{1-c}{1 +1/c^2}\leq f'(t) \leq \frac{4(1-c^2)}{c}.
\end{equation}
Similarly,
\begin{eqnarray*}
f''(t)= \frac{-2c}{(1-t)^3} + \frac{2}{f(t)(1-t)^3} - \frac{2f'(t)}{f(t)^2(1-t)^2} - \frac{t}{1-t} \left( \frac{f''(t)f(t)^2-2f(t)f'(t)^2}{f(t)^4}\right). 
\end{eqnarray*}
Hence
\begin{eqnarray*}
f''(t) \left(1+ \frac{t}{(1-t)f(t)^2} \right) = \frac{2(1/f(t) -c)}{(1-t)^3} - \frac{2 f'(t)}{(1-t)f(t)^2} \left(\frac{1}{1-t}- \frac{tf'(t)}{f(t)} \right).
\end{eqnarray*}
Using this together with  \eqref{ef} and \eqref{efp}, we can show that there is a positive constant $A=A(c)$, such that for all $t \in (0,1/2)$,
\begin{equation} \label{avma}
1/A \leq f'(t) \leq A \qquad \textrm{ and }  \qquad |f''(t)| \leq A. 
\end{equation}
Thus \eqref{fpc} holds. \hfill $\square$

\vspace{0.2 cm}
\noindent {\it Proof of \eqref{fhf}}.  Let  us recall  the sequence $h(k,m)$ defined in Section 2: $h(1,m)=c$ and for $k \leq [m/2]$,
 \begin{equation} \label{nhkn}
h(k+1,m)= \frac{c(m-2k) }{m-k}  + \frac{k}{(m-k) h( k,m)}.
\end{equation}
We define 
\[K=\left[\frac{A^2c^2+2}{c^3}\right],\]
with $A$  as in \eqref{avma}.   We first claim that  for  $m \geq 4K$  and $1 \leq k \leq k_*:= [m/2] -K$,
\begin{equation} \label{fhfh}
 f((k-1)/m) \leq h(k,m) \leq f(k/m).
\end{equation}
Assuming \eqref{fhfh}, we now prove \eqref{fhf}. Let us define for $k \leq [m/2]$,
\[a_k= |h(k,m)-f((k-1)/m)|.\]
By \eqref{fhfh},   for $0 \leq k \leq k_*$ we have
\begin{equation} \label{hohf}
a_k \leq |f(k/m)-f((k-1)/m)| \leq A/m, 
\end{equation}
by using the mean value theorem and \eqref{avma}. To estimate $(a_k)$ with $k\geq k_*$, we need some bounds on $h(k,m)$. By \eqref{nhkn}, we have for all $k_* \leq k \leq [m/2]=k_*+K$,
\begin{equation*}
\frac{1}{2h(k,m)} \leq h(k+1,m) \leq c+ \frac{1}{h(k,m)}.
\end{equation*}
Moreover, $c\leq h(k_*,m) \leq 1$ by  \eqref{ef} and \eqref{fhfh}. Thus there exists a positive constant $\Theta = \Theta(K,c) \geq 1$, such that for all $k_* \leq k \leq [m/2]$
\begin{equation} \label{tahta}
1/ \Theta \leq  h(k,m) \leq \Theta.
\end{equation}
By \eqref{ftbc}, we have for $k \leq [m/2]$, 
\begin{equation} \label{fckm}
f(k/m)= \frac{c(m-2k)}{m-k} + \frac{k}{(m-k)f(k/m)}.
\end{equation}
Then using \eqref{nhkn} and \eqref{fckm}, we get  that for $k_* \leq k \leq [m/2]$,
\begin{eqnarray} \label{roa}
 a_{k+1}  &=& \frac{k}{m-k} \Big| \frac{1}{h(k,m)} - \frac{1}{f(k/m)}\Big| \notag \\
  (\textrm{use }  \eqref{ef},  \eqref{tahta}) \hspace{2 cm} & \leq & \frac{\Theta |h(k,m)-f(k/m)|}{c}  \notag \\
& \leq & \frac{ \Theta |h(k,m)-f((k-1)/m)|}{c} + \frac{ \Theta|f(k,m)-f((k-1)/m)|}{c} \notag \\
(\textrm{use }   \eqref{avma})  \hspace{3.2 cm} & \leq & \frac{\Theta a_k}{c} + \frac{\Theta A}{mc}.    
\end{eqnarray}
Using \eqref{roa}, we can prove by induction on $t$ that for all $k_* \leq k_*+t \leq  [m/2]$, 
\begin{equation} \label{akmk}
a_{k_*+t} \leq  \left(\frac{\Theta}{c}\right)^t a_{k_*} + \frac{\Theta A}{mc} \sum \limits_{i=0}^{t-1}\left( \frac{\Theta }{c}\right)^i \leq 
\left(\frac{\Theta }{c}\right)^t a_{k_*} + \frac{A}{m}\left( \frac{\Theta }{c}\right)^{t+1}.
\end{equation}
Using \eqref{hohf} and \eqref{akmk}, we obtain that for all $k\leq [m/2]$,
\[a_k \leq \varkappa/m,\]
with 
$$\varkappa = A\left[\left(\frac{\Theta }{c}\right)^K  + \left( \frac{\Theta }{c}\right)^{K+1}\right],$$
which implies \eqref{fhf}.

We now prove  \eqref{fhfh} by induction on $k$.  For $k=1$, we have 
\[c=h(1,m)=f(0/m)  \leq f(1/m), \]
since $f(t)$ is increasing. Suppose that \eqref{fhfh} holds for all $ k \leq k_*-1= [m/2]-K-1$.  We now show that it holds  for $k+1$. Using \eqref{nhkn} and \eqref{fckm} and  $h(k,m) \leq f(k/m)$, we get
\begin{eqnarray} \label{hgf}
h(k+1,m) &=& \frac{c(m-2k)}{m-k} + \frac{k}{(m-k)h(k,m)} \notag\\
&\geq & \frac{c(m-2k)}{m-k} + \frac{k}{(m-k)f(k/m)} \notag\\
&=& f(k/m).
\end{eqnarray} 
Similarly, using   $f((k-1)/m) \leq h(k,m)$, we obtain
\begin{eqnarray*} 
h(k+1,m) &\leq & \frac{c(m-2k)}{m-k} + \frac{k}{(m-k)f((k-1)/m)} \notag \\
&=& f(k/m) + \frac{k}{m-k} \left(\frac{1}{f((k-1)/m)} -\frac{1}{f(k/m)}\right) \\
&\leq& f(k/m) + \frac{k}{m-k} \left(\frac{f(k/m) -f((k-1)/m)}{f((k-1)/m)^2} \right),
\end{eqnarray*}
since $f(t)$ is increasing in $[0,1/2]$. Let us define for $k \leq k_*$,
$$b_k=f(k/m) -f((k-1)/m).$$
Then
\begin{eqnarray} \label{hlf}
\hspace{-0.4 cm} f((k+1)/m) -h(k+1,m)  &\geq& b_{k+1} -  \frac{k}{m-k} \frac{b_k}{f((k-1)/m)^2}  \notag \\
&=& b_{k+1} - b_k + b_k \left(1-  \frac{k}{(m-k)f((k-1)/m)^2} \right).
\end{eqnarray}
Using the mean value theorem, we get
\begin{eqnarray} \label{bk}
b_k= \frac{f'(y_k)}{m} \hspace{2 cm} \textrm{and} \hspace{2 cm} b_{k+1}= \frac{f'(y_{k+1})}{m},
\end{eqnarray} 
for some $y_k \in ((k-1)/m,k/m)$ and $y_{k+1} \in (k/m,(k+1)/m)$.  Using  the mean value theorem,   \eqref{avma} and the fact that $|y_k-x_k| \leq 2/m$, we have
\begin{eqnarray} \label{bkk}
b_{k+1}-b_k=\frac{f'(y_{k+1})- f'(y_k)}{m} \geq \frac{-2 }{m^2} \max_{y_k \leq t \leq y_{k+1}}|f''(t)| \geq  \frac{-2A}{m^2}.
\end{eqnarray}
Using \eqref{fckm}, we obtain
\begin{eqnarray*}
1-\frac{(k-1)}{(m-k+1)f((k-1)/m)^2} =  \frac{c(m-2k+2)}{(m-k+1)f((k-1)/m)} \geq \frac{c(m-2k+2)}{(m-k+1)}, 
\end{eqnarray*}
since $f(t) \leq 1$ for all $t\leq 1/2$. On the other hand, for $k \leq  [m/2]$
\begin{eqnarray*}
\Big|\frac{k}{(m-k)f((k-1)/m)^2} - \frac{(k-1)}{(m-k+1)f((k-1)/m)^2} \Big| \leq  \frac{4}{m f((k-1)/m)^2} \leq \frac{4}{mc^2}, 
\end{eqnarray*}
since $c \leq f(t)$ for all $t\leq 1/2$. Combining the last two inequalities yields that
\begin{eqnarray} \label{mtkf}
1-\frac{k}{(m-k) f((k-1)/m)^2} \geq  \frac{c(m-2k+2)}{(m-k+1)} -\frac{4}{mc^2} \geq  \frac{c(m-2k+2)}{m} -\frac{4}{mc^2}. 
\end{eqnarray}
It follows from  \eqref{hlf}, \eqref{bk}, \eqref{bkk}, \eqref{mtkf} that  
\begin{eqnarray} \label{fghk}
f((k+1)/m) -h(k+1,m) \geq -\frac{2A}{m^2} + \frac{f'(y_k)}{m}  \left( \frac{c(m-2k+2)}{m} -\frac{4}{mc^2} \right) \notag \\
\geq - \frac{2A}{m^2} +  \frac{1}{Am} \left( \frac{c(m-2k+2)}{m} -\frac{4}{mc^2} \right)  \geq 0,
\end{eqnarray}
by  using \eqref{avma} and the fact that
$$k \leq  [m/2]- \left[\frac{A^2c^2+2}{c^3}\right].$$
It follows from \eqref{hgf} and \eqref{fghk} that the induction step from $k$ to $k+1$ holds. Thus the proof of \eqref{fhfh} is completed.
 \hfill $\square$
\subsection{Proof of Lemma \ref{lofl}} Assume that  $t \leq 1/2$.  Using  integration by parts, we have 
\begin{eqnarray}
F(t) &=& \int_0^t \log f(s) ds = t \log f(t) - \int_0^t \frac{f'(s)}{f(s)} sds. \label{Fft}
\end{eqnarray}
We have  $f(s)= A(s)/B(s)$, where
\[A(s)= e^{-2 \beta} (1-2 s) + \sqrt{1+ (e^{-4 \beta}-1)(2s-1)^2} \qquad \textrm{and} \qquad B(s)=2(1-s).\]
Moreover,
\begin{eqnarray*}
\frac{A'(s)}{A(s)}= \frac{1}{2s(1-s)} \left[ 1-2s - \frac{e^{-2 \beta}}{\sqrt{1+(e^{-4 \beta} -1)(2s-1)^2}}  \right], 
\end{eqnarray*}
and 
\begin{eqnarray*}
\frac{B'(s)}{B(s)}= \frac{-1}{1-s}. 
\end{eqnarray*}
Hence
\begin{eqnarray} \label{fpfs}
\frac{f'(s)s}{f(s)}&= &s \left[ \frac{A'(s)}{A(s)} - \frac{B'(s)}{B(s)}\right]  \notag \\
&=&\frac{1}{2(1-s)}  - \frac{e^{-2 \beta}}{2(1-s) \sqrt{1+(e^{-4 \beta} -1)(2s-1)^2}}. 
\end{eqnarray}
Combining \eqref{Fft} and \eqref{fpfs} gives that 
\begin{eqnarray} \label{Ftt}
F(t)= t \log f(t) + \frac{1}{2} \log (1-t) + \int_0^t  \frac{e^{-2 \beta}}{2(1-s) \sqrt{1+(e^{-4 \beta} -1)(2s-1)^2}} ds.
\end{eqnarray}
Let $\alpha= \sqrt{1-e^{-4 \beta}} \in (0,1)$. Then by computation and changing variables, we have 
\begin{eqnarray*}
J&=& \int_0^t  \frac{e^{-2 \beta}}{(1-s) \sqrt{1+(e^{-4 \beta} -1)(2s-1)^2}} ds \\
(u= 1 - 2s) \hspace{0.4 cm} &=& \int_{1-2t}^1  \frac{\sqrt{1- \alpha^2}  }{(1+u) \sqrt{1- \alpha^2 u^2 }} du \\
(v= \arcsin(\alpha u)) \hspace{0.4 cm} &=& \int^{\arcsin(\alpha)}_{\arcsin(\alpha(1-2t))}  \frac{\sqrt{1- \alpha^2}   }{\alpha+ \sin v } dv  \\  
(w= \tan(v/2)) \hspace{0.4 cm} &=& \int_{w_1}^{w_2}  \frac{2\sqrt{1- \alpha^2}   }{\alpha w^2+ 2 w + \alpha }  dw \\  
&=& \log  \left( \frac{\alpha w + 1 - \sqrt{1- \alpha^2}  }{\alpha w + 1 + \sqrt{1- \alpha^2}} \right) \Big|_{w_1}^{w_2}.
\end{eqnarray*}
For $x\in (0, 1)$, we have  
$$\tan \left(\frac{\arcsin(x)}{2} \right) = \frac{1- \sqrt{1- x^2}}{x}.$$
Thus 
$$w_2= \frac{1- \sqrt{1- \alpha^2}}{\alpha} \hspace{2 cm} \textrm{and} \hspace{2 cm} w_1= \frac{1- \sqrt{1- \alpha^2(1- 2 t)^2}}{\alpha(1- 2 t)}.$$
Therefore
\begin{eqnarray*}
J&=& \log \left(1- e^{-2 \beta} \right) - \log  \left( \frac{ (1- e^{-4 \beta})(1-2t)  + ( 1- e^{-2 \beta}) [1 + \sqrt{1 +( e^{-4 \beta} -1)(1-2t)^2}]}  {(1- e^{-4 \beta})(1-2t)  + ( 1+ e^{-2 \beta}) [1 + \sqrt{1 +( e^{-4 \beta} -1)(1-2t)^2}]} \right) \notag \\
&=& \log \left(1+ e^{-2 \beta} \right) - \log  \left( \frac{ (1+ e^{-2 \beta})(1-2t)  + 1 + \sqrt{1 +( e^{-4 \beta} -1)(1-2t)^2}}  {(1- e^{-2 \beta})(1-2t)  + 1 + \sqrt{1 +( e^{-4 \beta} -1)(1-2t)^2}} \right) \notag \\
&=& \log \left(1+ e^{-2 \beta} \right) + \log \left( \frac{1- t + t f(1-t)}{(1-t)(f(t)+1)} \right).
\end{eqnarray*}
Combining this with \eqref{Ftt}, we obtain 
\begin{eqnarray} \label{fltb} 
F(t)= t \log f(t) + \frac{1}{2} \log (1-t) + \frac{1}{2}\log (1+e^{-2 \beta})   + \frac{1}{2}\log \left[ 1+ \frac{e^{-2 \beta}(2t-1)}{(1-t)(f(t)+1)} \right].
\end{eqnarray}
Thus Lemma \ref{lofl} follows. \hfill $\square$
\subsection{Proof of Proposition \ref{eopp}} We first recall the formula for  the quenched pressure determined in \cite{DM}.
Suppose that $ \beta>0 $ and $B>0$. Let $h_*$ be the positive solution of a fixed point equation:
\begin{equation} \label{h}
h=B+ (d-1) \textrm{atanh} (\tanh(\beta) \tanh(h)).
\end{equation}  
Then 
\begin{eqnarray} \label{etps}
\tilde{\psi}(\beta, B) &= &\frac{d}{2} \log (\cosh(\beta)) - \frac{d}{2} \log \left(1+ \tanh(\beta) \tanh(h_*)^2 \right) \\ 
&& + \log \left[ e^B(1+ \tanh(\beta) \tanh(h_*))^d + e^{-B}(1- \tanh(\beta) \tanh(h_*))^d \right]. \notag
\end{eqnarray}
For $B<0$, one has $\tilde{\psi}(\beta, B) = \tilde{\psi}(\beta, -B)$, and   $\tilde{\psi}(\beta, 0) = \lim \limits_{B \downarrow 0} \tilde{\psi}(\beta, B)$. 
In this subsection, we will show that
\begin{equation} \label{mbmt}
 \tilde{\psi}(\beta, B)=\psi(\beta, B).
\end{equation}
We prove  here the case $B>0$, then  the other case follows from the fact that both of the functions $ \tilde{\psi}(\beta, \cdot)$ and $\psi(\beta, \cdot)$ are  even. We have proved in Sections 3 and 4 that for   $B >0$, 
\begin{equation} \label{ezng}
\psi(\beta,B)= \frac{\beta d}{2} -B +   L(t_*),
\end{equation}
where  
\[L(t)= -t \log (t) + (t-1) \log (1-t) + 2 Bt + d F(t), \]
and $t_* \in (1/2, 1)$ is the unique solution of  the equation 
\begin{equation} \label{lptk}
L'(t)= \log\left(\frac{1-t}{t}\right) - d  \log f(1-t) +2B=0.
\end{equation}
We claim a relation  between $h_*$ and $t_*$, which will prove later.   
\begin{equation} \label{etvs}
 2t_* - 1= \tanh (h_*+ \atanh (\tanh (\beta) \tanh(h_*))).
\end{equation}
Assuming \eqref{etvs}, we now prove  \eqref{mbmt}.

\noindent {\bf Expression of $\tilde{\psi}(\beta, B)$}. Let us denote 
$$u_* = \tanh(\beta) \tanh(h_*).$$
Applying the  function tanh to both sides of the equation \eqref{h}, we get 
 \begin{eqnarray} \label{tohs}
 \tanh(h_*) &=& \tanh (B + (d-1) \atanh (u_*)) \notag\\
&=& \frac{e^{2B}(1+u_*)^{d-1}-(1-u_*)^{d-1}}{e^{2B}(1+u_*)^{d-1}+(1-u_*)^{d-1}}.
 \end{eqnarray}
 Thus 
 \begin{eqnarray*}
 1+ \tanh(\beta) \tanh(h_*)^2= 1+ u_* \tanh(h_*) = \frac{e^{2B}(1+u_*)^{d}+(1-u_*)^{d}}{e^{2B}(1+u_*)^{d-1}+(1-u_*)^{d-1}}.
 \end{eqnarray*}
 Therefore, using \eqref{etps} we have 
 \begin{eqnarray} \label{tpsib}
 \tilde{\psi}(\beta, B) &=& \frac{d}{2} \log (\cosh(\beta)) - \frac{d}{2} \log \left[\frac{e^{2B}(1+u_*)^{d}+(1-u_*)^{d}}{e^{2B}(1+u_*)^{d-1}+(1-u_*)^{d-1}} \right] \notag \\
 && + \log  \left[e^{B}(1+u_*)^{d}+e^{-B}(1-u_*)^{d} \right].
 \end{eqnarray}
 {\bf Expression of $\psi(\beta, B)$}. We first display $t_*$ and $e^{-2 \beta}$ in term of $u_*$. Using \eqref{etvs}, we have 
 \begin{eqnarray} \label{htstm}
 2t_* - 1= \tanh (B+ d \atanh (u_*)) =  \frac{e^{2B}(1+u_*)^{d}-(1-u_*)^{d}}{e^{2B}(1+u_*)^{d}+(1-u_*)^{d}}. 
 \end{eqnarray}
Thus 
\begin{eqnarray} \label{tvts}
t_* = \frac{e^{2B}(1+u_*)^{d}}{e^{2B}(1+u_*)^{d}+(1-u_*)^{d}} \quad \textrm{ and } \quad 1 -t_* =  \frac{(1-u_*)^{d}}{e^{2B}(1+u_*)^{d}+(1-u_*)^{d}}.
\end{eqnarray}
 On the other hand,  using \eqref{tohs} we get
 \begin{eqnarray} \label{bus}
 e^{-2 \beta} = \frac{1-\tanh(\beta)}{1+\tanh(\beta)} = \frac{1- \frac{u_*}{ \tanh(h_*)}}{1+ \frac{u_*}{ \tanh(h_*)}} = (1-u_*^2)\frac{e^{2B}(1+u_*)^{d-2}-(1-u_*)^{d-2}}{e^{2B}(1+u_*)^{d}-(1-u_*)^{d}}.
 \end{eqnarray}
Since $t_* >1/2$, we have $F(t_*)=F(1-t_*)$. Thus using  \eqref{fltb},  
 \begin{eqnarray} \label{fmtt} 
F(t_*)= F(1-t_*) &=& (1-t_*) \log f(1-t_*) + \frac{1}{2} \log t_* + \frac{1}{2}\log (1+e^{-2 \beta})  \notag \\ 
 && + \frac{1}{2}\log \left[ 1+ \frac{e^{-2 \beta}(1-2t_*)}{t_*(f(1-t_*)+1)} \right].
\end{eqnarray}
Since $t_*$ is the solution of \eqref{lptk}, we have
\begin{eqnarray} \label{fmtsb}
f(1-t_*) = e^{\frac{2B}{d}} \left(\frac{1-t_*}{t_*} \right)^{\frac{1}{d}}.
\end{eqnarray}
Using \eqref{fmtsb} and  \eqref{tvts}, 
\begin{eqnarray} \label{fmtu}
f(1-t_*) = \frac{1-u_*}{1+u_*}. 
\end{eqnarray}
Combining  \eqref{htstm}, \eqref{tvts}, \eqref{bus} and \eqref{fmtu} yields that 
\begin{equation} \label{caid}
1+ \frac{e^{-2 \beta}(1-2t_*)}{t_*(f(1-t_*)+1)} = \frac{(1+u_*)^2}{2} \times \frac{ e^{2B}(1+u_*)^{d-1}+(1-u_*)^{d-1}}{e^{2B}(1+u_*)^d}.
\end{equation}
Using \eqref{fmtt}, \eqref{fmtsb} and \eqref{caid},
\begin{eqnarray*}
dF(t_*) &=& (1-t_*) \left( 2B + \log \left( \frac{1-t_*}{t_*}\right) \right) +  \frac{d}{2}\log (1+e^{-2 \beta}) \notag \\
&&+ d \log (1+u_*) - \frac{d}{2} \log 2 + \frac{d}{2} \log \left(t_* \frac{e^{2B}(1+u_*)^{d-1}+(1-u_*)^{d-1}}{e^{2B}(1+u_*)^d}\right)  \notag \\
&=& (2-2t_*)B  + (1-t_*) \log \left( \frac{1-t_*}{t_*}\right)  +  \frac{d}{2}\log \big[(1+e^{-2 \beta})/2\big] \notag \\
(\textrm{use } \eqref{tvts}) \hspace{1 cm}  &&+ d \log (1+u_*)  + \frac{d}{2} \log \left( \frac{e^{2B}(1+u_*)^{d-1}+(1-u_*)^{d-1}}{e^{2B}(1+u_*)^d+(1-u_*)^{d}}\right).  
\end{eqnarray*}
Hence 
\begin{eqnarray*}
&&\psi(\beta, B) = \frac{\beta d}{2} - B + (t_*-1) \log (1-t_*) -t_* \log t_* + 2B t_* +d F(t_*) \\
&=&\frac{d}{2} \left( \beta + \log \big[(1+e^{-2 \beta})/2\big] \right) +B - \log t_* + d \log (1+u_*)  + \frac{d}{2} \log \left( \frac{e^{2B}(1+u_*)^{d-1}+(1-u_*)^{d-1}}{e^{2B}(1+u_*)^d+(1-u_*)^{d}}\right) \notag \\
&=&\frac{d}{2} \log( \cosh(\beta))+ \log \left(e^{B} t_*^{-1}(1+u_*)^d \right) + \frac{d}{2} \log \left( \frac{e^{2B}(1+u_*)^{d-1}+(1-u_*)^{d-1}}{e^{2B}(1+u_*)^d+(1-u_*)^{d}}\right) \notag \\
&=&\frac{d}{2} \log( \cosh(\beta))+  \log \left( e^{B}(1+u_*)^d+ e^{-B}(1-u_*)^{d}\right)  + \frac{d}{2} \log \left( \frac{e^{2B}(1+u_*)^{d-1}+(1-u_*)^{d-1}}{e^{2B}(1+u_*)^d+(1-u_*)^{d}}\right),
\end{eqnarray*}
where for the last line, we used \eqref{tvts}. Using this equation with \eqref{tpsib}, we obtain that $$\psi(\beta, B)= \tilde{\psi}(\beta, B),$$
which proves \eqref{mbmt}. For $d=2$, it has been shown in \cite{GGHPa, GGHPb} that 
\[\psi(\beta, B) = \tilde{\psi}(\beta, B)= \beta + \log \left( \cosh (B) + \sqrt{ \sinh ^2 (B) + e^{-4 \beta}} \right). \]

\noindent {\it Proof of \eqref{etvs}}. Let us denote
$$v_*= \tanh (h_*+ \atanh (\tanh (\beta) \tanh(h_*))).$$
We claim  the following identity {\bf (E)}:  For all $x>0$ and $y \in \R$, if 
\begin{equation} \label{vty}
v= \tanh (y+ \atanh (\tanh (x) \tanh(y))), 
\end{equation}
then
\begin{equation} \label{ecsh}
 \frac{e^{-2 x  }v+ \sqrt{1+(e^{-4x} -1)v^2}}{v+1} = \frac{\cosh(x-y)}{\cosh(x+y)}.
\end{equation} 

\vspace{0.2 cm}
\noindent Assuming {\bf (E)}, we can prove \eqref{etvs}. Indeed, using  \eqref{ecsh} we get 
\begin{equation} \label{csvs}
 \frac{e^{-2 \beta  }v_*+ \sqrt{1+(e^{-4\beta} -1)v_*^2}}{v_*+1} = \frac{\cosh(\beta-h_*)}{\cosh(\beta+h_*)}.
\end{equation} 
Since $h_*$ is the solution of \eqref{h}, we have
\begin{equation*} \label{vsa}
v_*= \tanh (B+  d \atanh (\tanh (\beta) \tanh(h_*))).
\end{equation*}
Applying the function atanh to the both sides of the above equation, we obtain
\begin{eqnarray*}
\frac{1}{2} \log \left( \frac{1+v_*}{1-v_*} \right)= B + \frac{d}{2}   \log \left(  \frac{\cosh(\beta+h_*)}{\cosh(\beta-h_*)}\right).
\end{eqnarray*}
Combining this with \eqref{csvs}, we have 
\begin{eqnarray} 
 \log \left( \frac{1-v_*}{1+v_*} \right)  - d \log \left( \frac{e^{-2 \beta  }v_*+ \sqrt{1+(e^{-4\beta} -1)v_*^2}}{v_*+1}\right)+2B = 0,
\end{eqnarray}
or equivalently, by using \eqref{lptk}
\begin{equation} \label{eovs}
L' \left( \frac{v_*+1}{2} \right)=0.
\end{equation}
It follows from \eqref{lptk} and \eqref{eovs} that $t_*$ and $(v_*+1)/2$ are  solutions in $ (1/2,1) $ of the  equation $L'(x)=0$. We have proved in Claim $1^*$ in Section 4 that this equation has unique solution. Thus  $t_*=(v_*+1)/2$, and \eqref{etvs} follows. 

\vspace{0.2 cm}
\noindent We now prove the identity {\bf (E)}.  Applying the function atanh to the both sides of  \eqref{vty} gives that  
\begin{eqnarray} \label{mvv}
\frac{1+v}{1-v} =  \frac{ e^{2 y }  \cosh(x+y)}{\cosh(x-y)}.
\end{eqnarray}
Hence, \eqref{ecsh} is equivalent to 
\begin{equation*}
(1-v)e^{2y} = e^{-2 x} v +  \sqrt{1+(e^{-4x} -1)v^2},
\end{equation*}
or 
\begin{equation} \label{cav}
\begin{cases} 
(1-v)e^{2y} - e^{-2 x} v \geq 0 \\
\left( (1-v)e^{2y} - e^{-2 x} v \right)^2 = 1+(e^{-4x} -1)v^2
\end{cases}
\end{equation}
We have 
\begin{eqnarray*}
(1-v)e^{2y} - e^{-2 x} v \geq 0 &\Leftrightarrow&  e^{2x+2y} \geq \frac{v}{1-v}. 
\end{eqnarray*}
On the other hand 
\begin{eqnarray*}
\frac{v}{1-v} \leq \frac{1+v}{1-v} = \frac{ e^{2 y }  \cosh(x+y)}{\cosh(x-y)} = \frac{e^{y-x}+e^{3y+x}}{e^{x-y}+e^{y-x}} \leq  e^{2x+2y},
\end{eqnarray*}
since $x>0$. Hence, the  inequality in \eqref{cav} holds.  The  equation in \eqref{cav} is equivalent to
\begin{eqnarray*}
&&(1-v)^2e^{4y} - 2v(1-v) e^{2y-2 x} = 1-v^2 \\
&\Leftrightarrow & \left( \frac{1-v}{1+v} \right) e^{4y} - \frac{2v}{1-v} e^{2y-2x} = 1 \\
&\Leftrightarrow & \left( \frac{1-v}{1+v} \right) e^{4y} + \left(\frac{1-v}{1+v}-1 \right) e^{2y-2x} = 1 \\
&\Leftrightarrow & \left( \frac{1-v}{1+v} \right) (e^{4y} + e^{2y-2x})   = 1 + e^{2y-2x}. \\
\textrm{(using \eqref{mvv})} \hspace{0.6 cm} &\Leftrightarrow & e^{-2y} \left( \frac{e^{x-y}+e^{y-x}}{e^{x+y}+e^{-x-y}} \right) (e^{4y} + e^{2y-2x})   = 1 + e^{2y-2x}. \\
&\Leftrightarrow & e^{3y-x} + e^{y-3x} = e^{3y-x} + e^{y-3x},
\end{eqnarray*}
which holds for all $x,y$. In conclusion, \eqref{cav} holds, and thus \eqref{ecsh} follows.  \hfill $\square$

\begin{ack} \emph{
 I would like to thank Bruno Schapira, Arnaud Le Ny and Trung Ha  for their  helpful discussions. I wish to thank also anonymous referees for carefully reading the manuscript and many valuable comments.}
 \end{ack}

\end{document}